\theoremstyle{definition}
\newtheorem{definition}{Definition}[section]
\newtheorem{theorem}{Theorem}[section]
\newtheorem{lemma}[theorem]{Lemma}
\theoremstyle{remark}
\newcommand{\R}{\mathbb{R}}
\newcommand{\N}{\mathcal{N}}
\newcommand{\stab}{\text{stab}}
\begin{document}


\title{Rate-induced tipping from periodic attractors: partial tipping and connecting orbits} 

\author{Hassan M. Alkhayuon}
\email[]{ha317@exeter.ac.uk}
\author{Peter Ashwin}
\email[]{p.ashwin@exeter.ac.uk}

\affiliation{Centre for Systems, Dynamics and Control, Department of Mathematics, University of Exeter, Exeter EX4 4QF, UK}

\date{\today}

\begin{abstract}
We consider how breakdown of the quasistatic approximation for attractors can lead to rate-induced tipping, where a qualitative change in tracking/tipping behaviour of trajectories can be characterised in terms of a critical rate. Associated with rate-induced tipping (where tracking of a branch of quasistatic attractors breaks down) we find a new phenomenon for attractors that are not simply equilibria: partial tipping of the pullback attractor where certain phases of the periodic attractor tip and others track the quasistatic attractor. For a specific model system with a parameter shift between two asymptotically autonomous systems with periodic attractors we characterise thresholds of rate-induced tipping to partial and total tipping. We show these thresholds can be found in terms of certain periodic-to-periodic (PtoP) and periodic-to-equilibrium (PtoE) connections that we determine using Lin's method for an augmented system.
\end{abstract}

\pacs{}
\keywords{Rate-induced tipping, pullback attractor, parameter shift, non-autonomous system}

\maketitle 

\begin{quotation}
Rate-induced tipping is a mechanism where reaching a critical rate of change (rather than a critical value) of a parameter leads to a sudden change in a system's attracting behaviour \cite{Ashwin2011}. Although there have been several studies of this mechanism for systems with equilibrium attractors, rate-induced tipping from more general attractors (including periodic orbits) is less well understood. We tackle this problem for parameter shift systems \cite{Ashwin2017} by considering properties of forward limits of local pullback attractors, with respect to changes in the rate of the parameter shift. One of the key observations of this paper is that the system may undergo {\em partial tipping} before reaching full tipping: partial tipping occurs when some orbits still track the quasistatic attractor whilst others tip. We also show that the distinction between partial and full tipping can in some circumstances be related to the presence of global connecting orbits in an extended system, and we compute these thresholds using Lin's method. 
\end{quotation}

\section{Introduction}

Motivated by studies of climate \cite{Lenton2011, Schellnhuber2009, Ditlevsen2010},  ecological\cite{Scheffer2008,Scheffer2009}, financial\cite{May2008,Yukalov2009} and biological systems\cite{Nene2010}, the importance of \textit{tipping points} in understanding sudden changes has been a focus of increasing interest in the last few years. Although there is no agreed definition, a tipping point occurs when a system has a sudden, irreversible change in output in response to a small change in input. This change can be associated with a bifurcation (B-tipping), external noise (N-tipping) that can change the stability of multistable system, or with a critical rate (R-tipping) when a system fails to track a continuously changing quasistatic attractor \cite{Scheffer2008,Ashwin2011}. Whilst N- and B-tipping are relatively well studied, rate-induced tipping (R-tipping) has only recently been identified \cite{Wieczorek2011,Ashwin2011} as a distinct mechanism that can cause tipping in a system where there is no bifurcation or noise involved but where the system is {\em nonautonomous} (i.e. not only the solutions but the system itself varies with time).  Since then, a number of papers have studied R-tipping and related effects either using the theory of fast-slow dynamical systems \cite{Kuehn2011,Perryman2014} or notions from nonautonomous stability theory \cite{Hoyer-Leitzel2017,Ashwin2017,Li2016}. In particular, it has been suggested that {\em local pullback attractors} (where typical initial conditions are chosen from some open region in the distant past) provide a suitable setting to describe such transitions\cite{Ashwin2017}. Further studies have attempted to provide early warning indicators for this type of tipping points \cite{Ritchie2016,Ritchie2017,Clements2016}.

\citet{Ashwin2017} propose a framework for R-tipping for nonautonomous systems that limit to different autonomous systems in the past and future. They call these {\em parameter shift systems} and propose that R-tipping is associated with a change in properties of a pullback attractor for the associated nonautonomous system. They relate properties of the pullback attractor to those of the quasistatic system at fixed parameters. Most studies\cite{Ashwin2017,Perryman2014,Ritchie2016} have so far only considered R-tipping from pullback attractors that limit to equilibria: this paper generalizes this framework to include cases where the quasistatic attractor is not necessarily an equilibrium. In doing so we find new phenomenon - the appearance of {\em partial tipping} where the phase of the orbit can influence whether it ``tips'' or not, for some open region in parameter space. For a particular example system \eqref{eq:zLambda} we investigate partial tipping - see Figure~\ref{fig:intro}. We relate different types of tipping and boundaries between them to the presence of periodic-to-periodic (PtoP) or periodic-to-equilibrium (PtoE) connections for an extended system, implementing Lin's method to numerically locate boundaries between types of tipping in this example.

The paper is organised as follows: Section~\ref{sec:parshift} examines backward limits of quite general nonautonomous invariant sets in the setting of parameter shifts with rate dependence, and considers the relation between (local) pullback attractors of the nonautonomous system and attractors for the quasistatic system. Theorem~\ref{thm:pbpastlimit} shows the backward limit of a local pullback attractor limits to an attractor for the past limit system.  Section~\ref{sec:rtipping} uses these local pullback attractors to investigate rate-induced tipping for parameter shifts where the quasistatic attractors may be periodic. We define R-tipping in terms of {\em forward limits} of pullback attractors and in Theorem~\ref{thm:tracking} extend previous results\cite{Ashwin2017} for equilibrium attractors to the case of more general branches of attractors. Section \ref{sec:example} studies a specific example of tipping from a branch of periodic orbits, where we demonstrate the different types of tipping are present. For this example (see Figure~\ref{fig:intro}) we show  that the thresholds of R-tipping can be determined using a numerical implementation of Lin's method for computing connecting orbits. We conclude with a discussion of the results in Section~\ref{sec:discuss}.

\begin{figure}
	\subcaptionbox{} [0.32\linewidth]
	{\includegraphics[scale = 0.33]{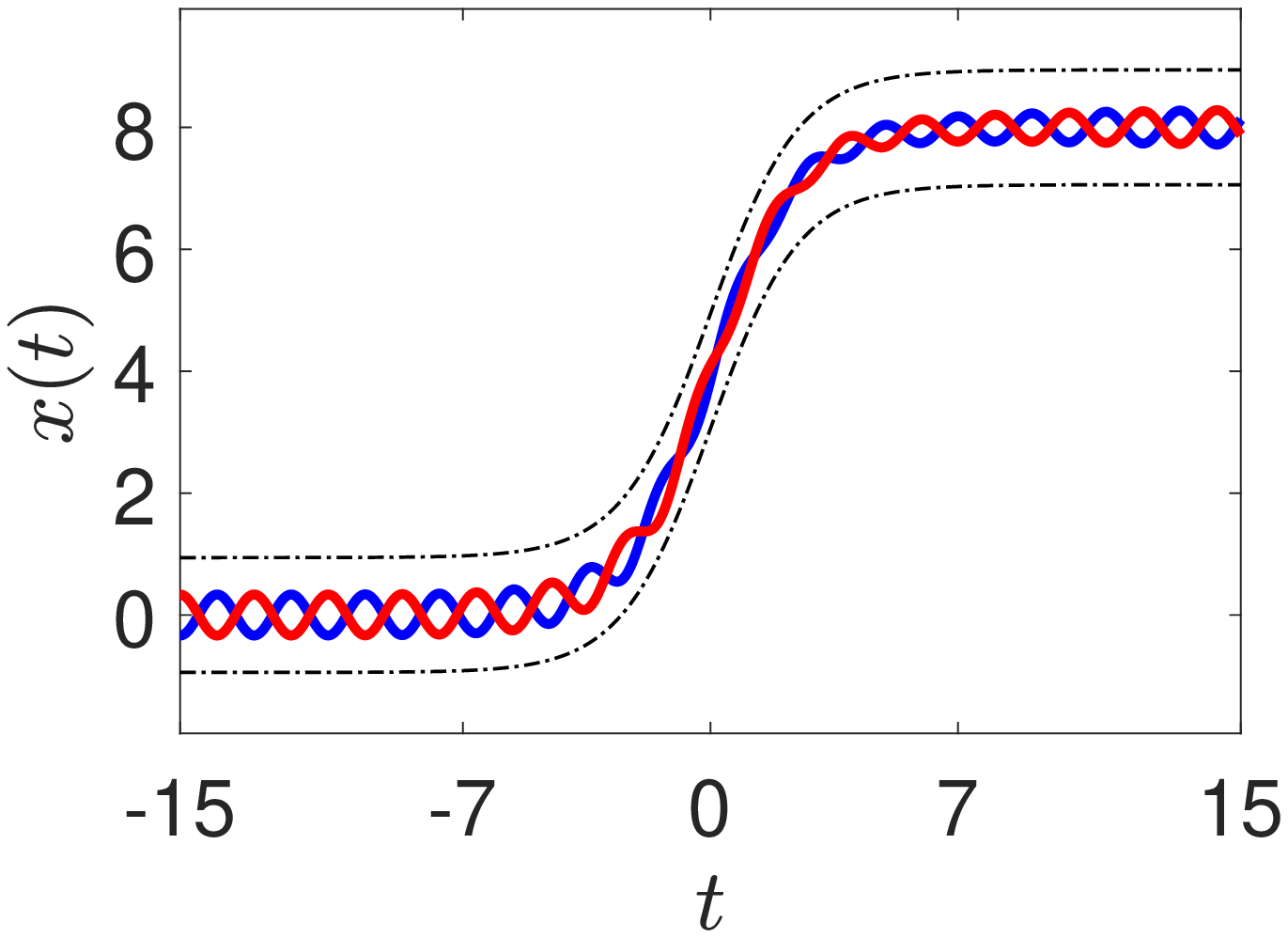}}
	 	\subcaptionbox{} [0.32\linewidth]
	{\includegraphics[scale = 0.33]{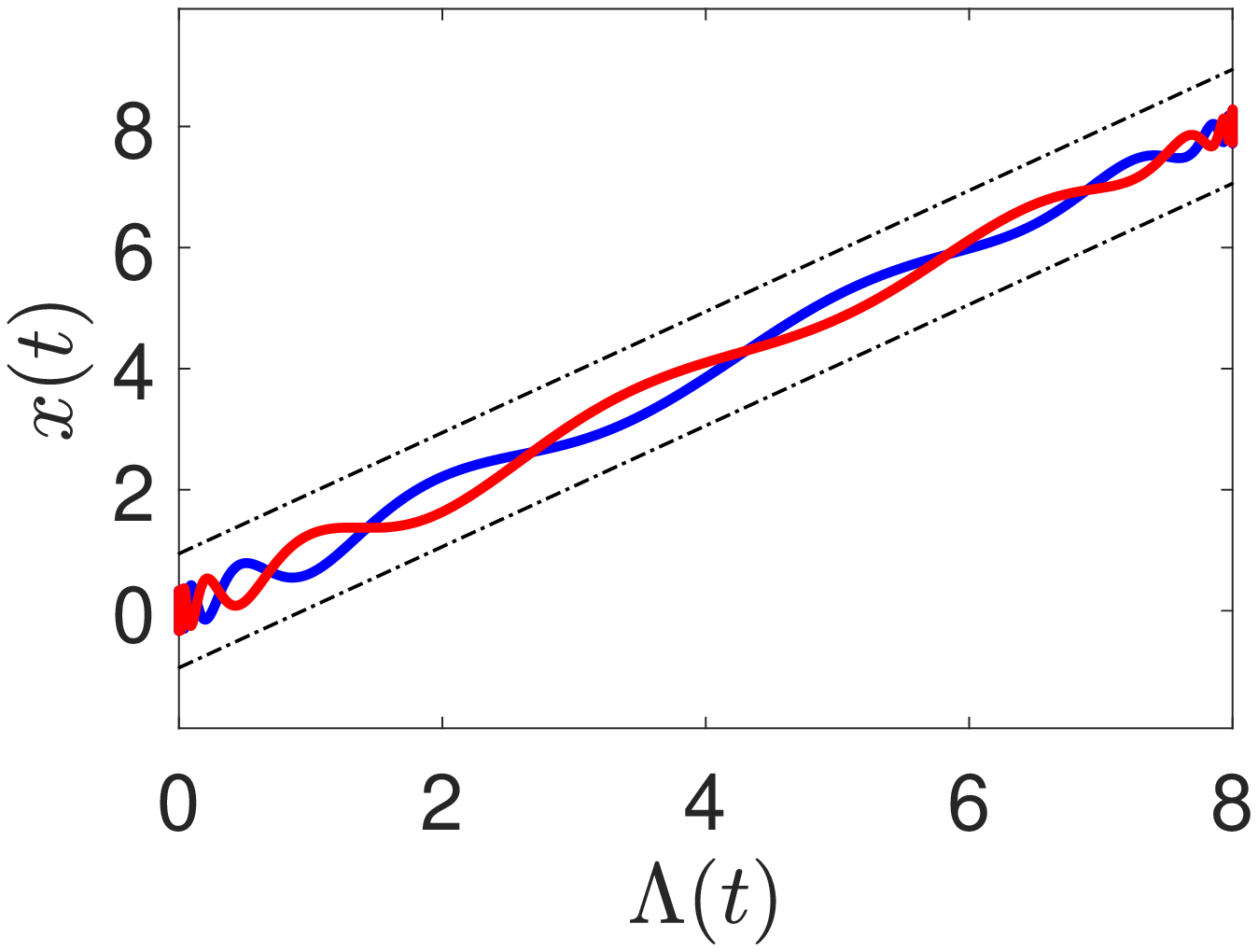}}
	\subcaptionbox{}[0.32\linewidth]
	{\includegraphics[scale = 0.33]{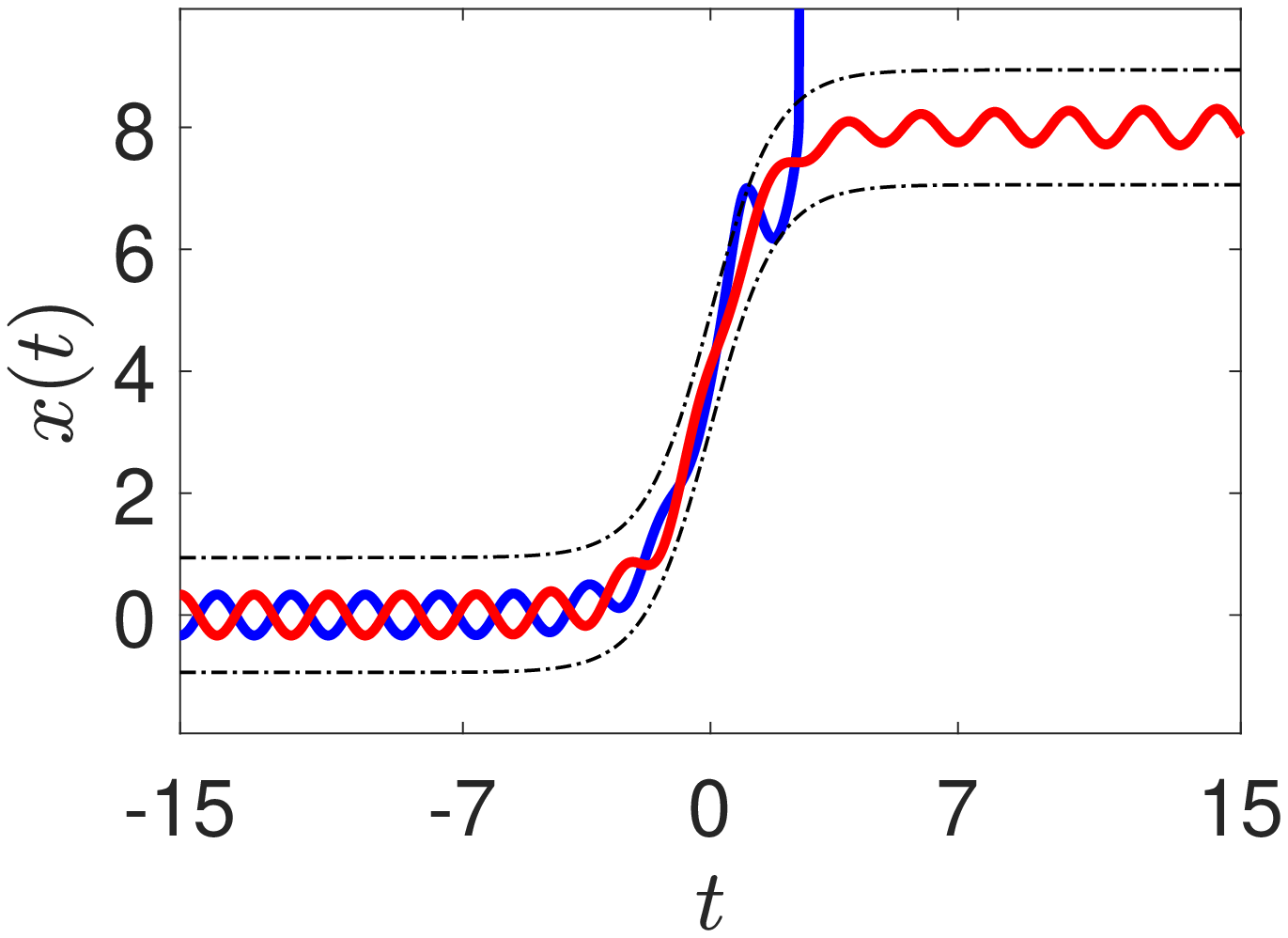}}

	\subcaptionbox{}[0.32\linewidth]
	{\includegraphics[scale = 0.33 ]{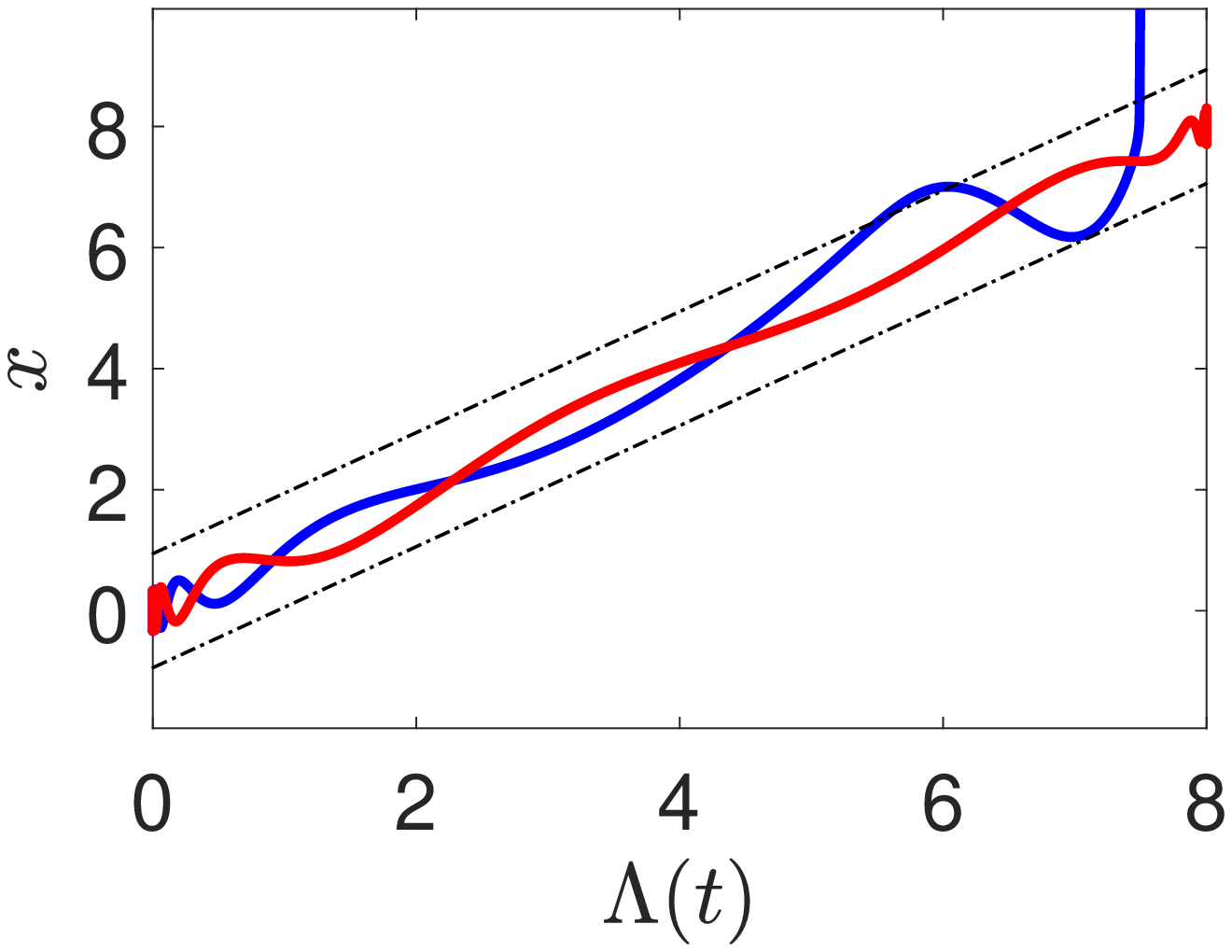}}
	\subcaptionbox{}[0.32\linewidth]
	{\includegraphics[scale = 0.33]{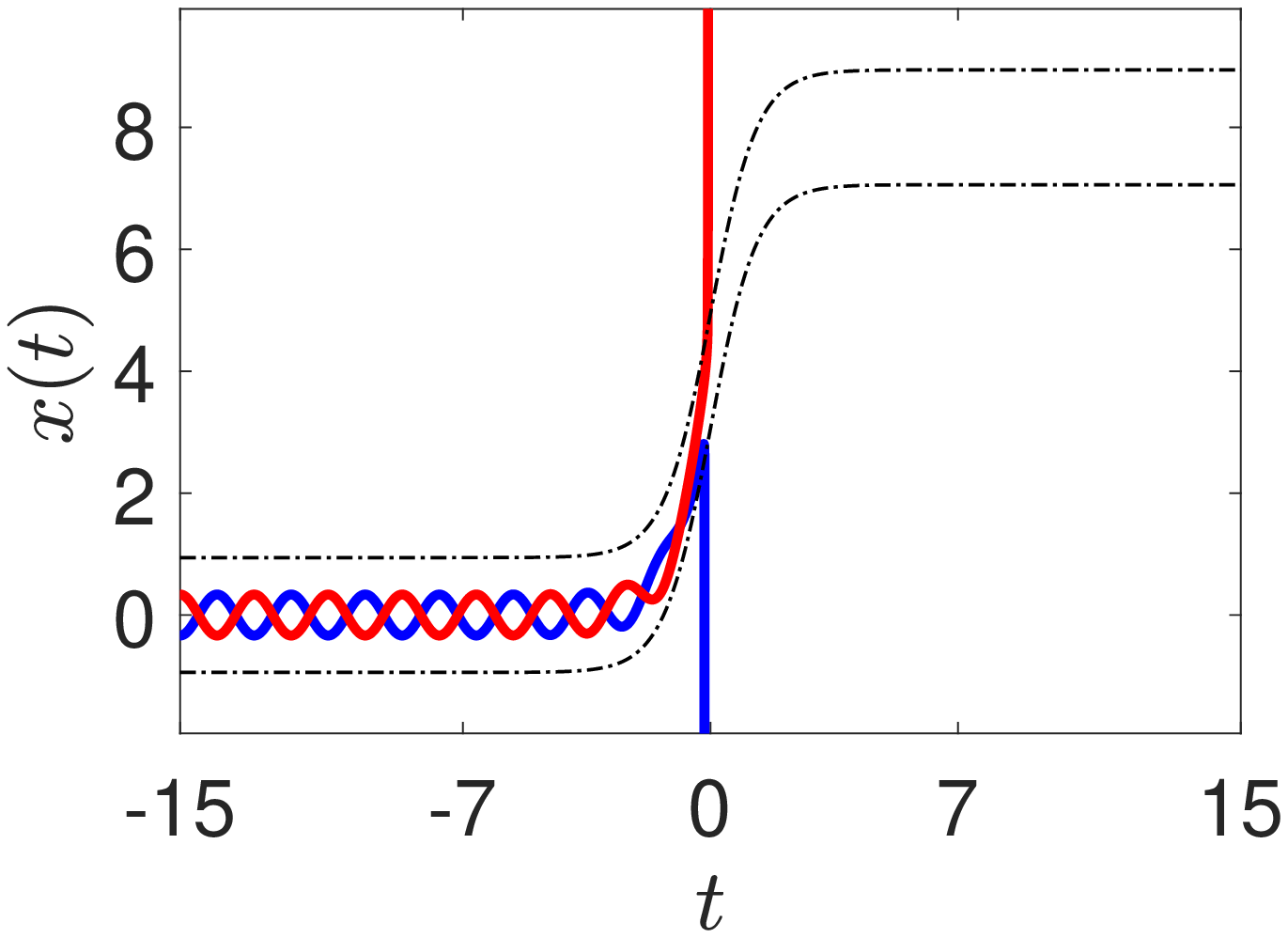}}
	\subcaptionbox{}[0.32\linewidth]
	{\includegraphics[scale = 0.33]{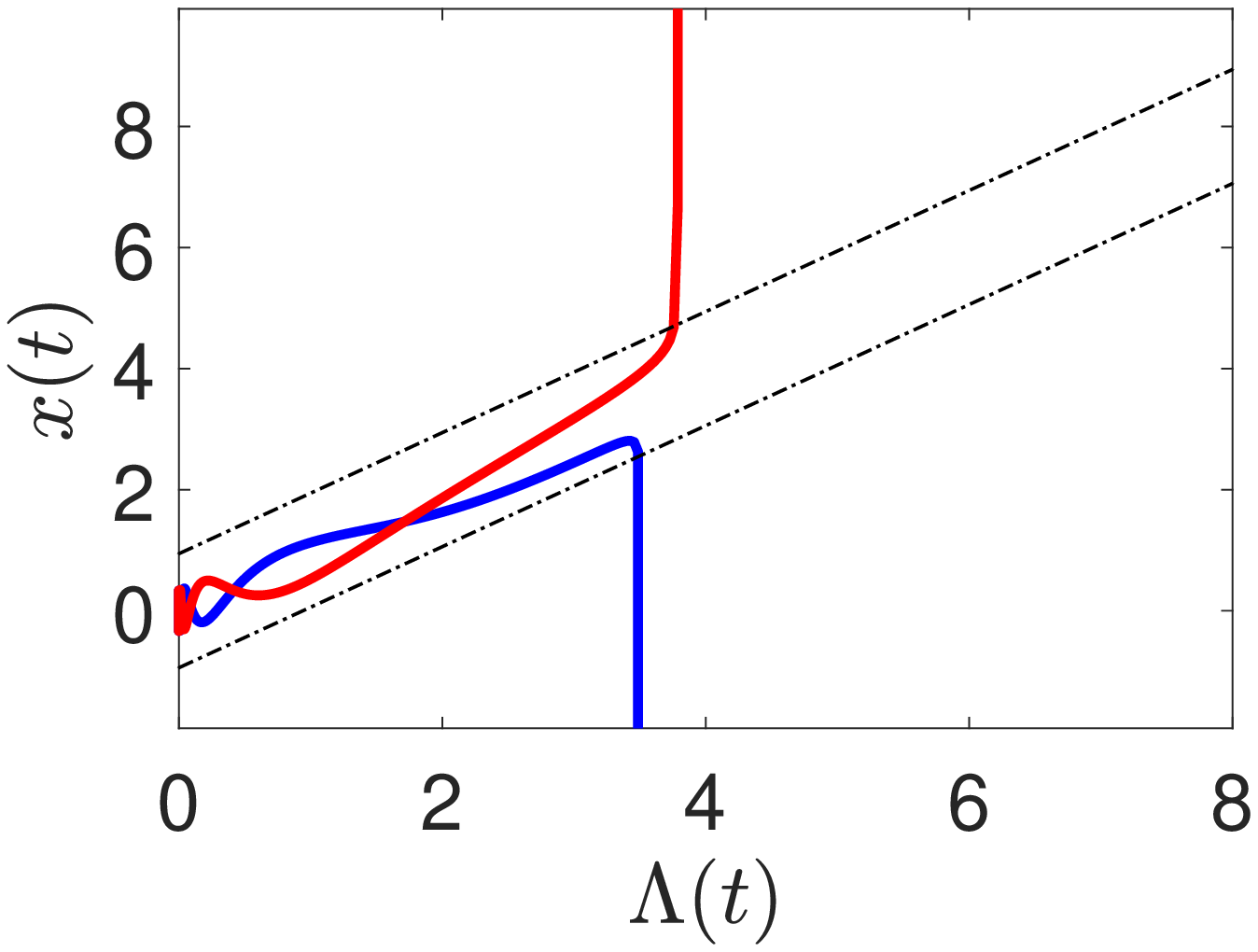}}
	
	\caption{The $x$ components of two typical trajectories (a,c,e) plotted against $t$ and (b,d,f) against $\Lambda$ for system \eqref{eq:zLambda} with $a = 0.1$, $b=1$, $\omega=3$ and $\lambda_{\max}=8$. The black dashed lines show minimum and maximum values of the basin of attraction of a periodic quasistatic attractor. The rates are (a,b) $r=0.1$ showing tracking of the quasistatic attractor,  (c,d) $r=0.1344$ showing evidence of partial tipping (some trajectories escape, some do not) and (e,f) $r=0.2$ showing evidence of total tipping (all trajectories escape).}
	\label{fig:intro}
\end{figure}

\section{Parameter shift systems} 
\label{sec:parshift}

Consider the dynamical system generated by the following nonautonomous differential equation 
\begin{equation}
\dot{x}=f(x,\Lambda(rt)) \label{eq:nonautonomous}
\end{equation}
where $x\in \R^n$, $t,r \in \R$ and $f$ is at least $C^1$ in both arguments. We fix $\lambda_-<\lambda_+$ and call a smooth function a {\em parameter shift}\cite{Ashwin2017} from $\lambda_{-}$ to $\lambda_{+}$ if it varies between these limiting values, more precisely if it is a function $\Lambda: \R \rightarrow (\lambda_-,\lambda_+)$ such that:
\begin{itemize}
\item $\lim_{\tau \rightarrow \pm \infty} \Lambda(\tau) = \lambda_\pm$
\item $\lim_{\tau \rightarrow \pm \infty} d\Lambda/ d\tau = 0$. 
\end{itemize} 
We denote the solution (also called the solution cocycle) of the nonautonomous system \eqref{eq:nonautonomous} with $x(s)=x_0$ by $\Phi(t,s,x_0):= x(t)$ (Note that $\Phi$ depends on $r$ and $\Lambda$ but we will suppress this dependence in most cases). There is an associated autonomous system for \eqref{eq:nonautonomous}, namely 
\begin{equation}
\dot{x}=f(x,\lambda) \label{eq:autonomous}
\end{equation}
where $\lambda$ is constant and denote the solution flow of (\ref{eq:autonomous}) by $\phi_{\lambda}(t,x)$. \citet{Ashwin2017} consider cases where the only attractors of (\ref{eq:autonomous}) are equilibria; we allow the system to have more general attractors. As in \citet{Ashwin2017}, we aim to understand attraction properties of \eqref{eq:nonautonomous} with reference to properties of attractors \eqref{eq:autonomous}. More precisely, we define {\em backward and forward limits of the pullback attractor} of \eqref{eq:nonautonomous} and relate these to attractors for the limiting cases
\begin{equation}
\dot{x}=f(x,\lambda_{\pm}). \label{eq:pastlim}
\end{equation}
We refer to (\ref{eq:pastlim}) in the case $\lambda_{-}$ as the {\em past limit system} and in the case $\lambda_{+}$ as the {\em future limit system}.

\subsection{Bifurcations of the autonomous system}

 Recall that a compact $\phi_\lambda$-invariant subset $M \subset \R^n$ is \textit{asymptotically stable} if it satisfies: 
\begin{itemize}
\item For all $\epsilon > 0$ there exists a $\delta=\delta (\epsilon) > 0$ such that
$$
d(\phi_{\lambda}(t,y),M)< \epsilon \ \text{for all} \ t>0 \ \text{for all} \ y\in\mathcal{N}_{\delta}(M).
$$
\item There exists an $\eta > 0$ such that 
$$
\lim_{t \rightarrow \infty} d(\phi_\lambda (t,y),M)=0 \ \text{for all} \ y \in \mathcal{N}_\eta (M),
$$
\end{itemize}
where $d(X,Y):=\sup_{x\in X}\inf_{y\in Y} \|x-y\|$ is the Hausdorff semi-distance between two non-empty compact subsets $X$ and $Y$ of $\mathbb{R}^n$, the distance from a point $x$ to a set $Y$ is given by $d(x,Y):= d(\{x\},Y)$, and the $\eta$-neighbourhood of $M$ is defined
$$
\mathcal{N}_\eta (M):=\{x\in\R^n~:~d(x,M)<\eta\}.
$$
The Hausdorff distance\cite{Kloeden} between two nonempty compact subsets $X$ and $Y$ of $\mathbb{R}^n$ is defined
$$
d_H \left(X,Y \right) := \max \left\{ d\left(X,Y\right), d \left(Y,X\right)\right\}.
$$
We say a connected compact invariant set $M\subset \R^n$ is an {\em exponentially stable attractor} \footnote{See  \citet[Definition~5.34]{Hartman2014} for globally exponentially stable equilibrium}  for $\phi_{\lambda}$ if there are $\mu>0$, $\eta>0$ and $C\geq 1$ such that

\begin{equation}
\label{eq:linstab}
d(\phi_{\lambda}(t,x),M)\leq  C e^{-\mu t} d(x,M)\ 
\end{equation}
for all $x \in \mathcal{N}_{\eta}(M)$ and $t>0$. Note this implies that $M$ is asymptotically stable.  

Let us denote the set of all exponentially stable attractors by $\mathcal{X}_\text{stab}$: this includes hyperbolic attracting equilibria and periodic orbits: we call $\overline{\mathcal{X}_\text{stab}} \setminus \mathcal{X}_\text{stab}$ the set of {\em bifurcation points}. A continuous set valued function $A(\Lambda(\tau))$, where $A(\Lambda(\tau)) \in \overline{\mathcal{X}_\text{stab}}$, for all $\tau\in \R$, is called a \textit{stable path}. If there exists a choice of $\mu,\eta,C$ (independent of $\lambda$) such that (\ref{eq:linstab}) holds then we say the path is {\em uniformly stable}. A uniformly stable path is called {\em stable branch}. Note that a path can include several stable branches joined at bifurcation points, however in this paper we restrict to stable branches. 

The example in Section~\ref{sec:example} only has branches of attractors of (\ref{eq:autonomous}) that are periodic orbits and equilibria but unless indicated, the remaining results hold for branches of more general attractors. 

\subsection{Local pullback attractors and backward limits}

We recall some concepts from the nonautonomous (set valued) theory of dynamical systems\cite{Kloeden}. A set-valued function of $t\in\R$ (family of nonempty subsets of $\mathbb{R}^n$)  is called a {\em nonautonomous set} and written $\mathcal{A}=\{A_t\}_{t\in\R}$ with $A_t\subset \R^n$ the {\em fibre}. We use the {\em upper limit} of a sequence of sets\cite{set-valued} to define the limiting behaviour of $\mathcal{A}$. Note there is also a lower limit\cite{set-valued, Rasmussen2008}, but the upper limit captures the asymptotic behaviour in a maximal sense.

For a nonautonomous set $\mathcal{A}=\{ A_t \}_{t \in \R}$ the {\em upper forward limit} $A_{+\infty}$ and the {\em upper backward limit} $A_{-\infty}$ are defined as: 
$$
A_{+\infty} := \limsup_{t\rightarrow\infty}A_t=\bigcap_{\tau > 0} \overline{\bigcup_{t\geq\tau} A_t}
$$
$$
A_{-\infty} := \limsup_{t\rightarrow-\infty}A_t=\bigcap_{\tau > 0} \overline{\bigcup_{t\leq -\tau} A_t}.
$$
A nonautonomous set $\mathcal{A}=\{ A_t \}_{t \in \R}$ with $A_t\subset \R^n$ is called {\em invariant} for \eqref{eq:nonautonomous} if $\Phi(t,s,A_s)=A_t$ for all $t,s$. A nonautonomous set is called {\em compact, bounded} etc if $A_t$ is compact, bounded etc for all $t\in\R$.

Note that for general nonautonomous systems, $A_{\pm \infty}$ may be at least as complex as an invariant set for an autonomous system (e.g. it may have fractional dimension, or indeed empty). However, for the parameter shift systems that we consider those limits that can be linked to the behaviour of past and future limit systems as follows. The first result shows that if there are past (future) limit systems then the backward limit $A_{-\infty}$ (forward limit $A_{+\infty}$) is invariant for the limit system that we define to be $\phi_{\pm}:=\phi_{\lambda_{\pm}}$.

\begin{lemma}
For a parameter shift from $\lambda_-$ to $\lambda_+$ and a nonautonomous invariant set $\mathcal{A}$ with fibre $A_t$, if $A_{\pm\infty}$ is bounded then we have
$$
\phi_{\pm}(s,A_{\pm\infty})=A_{\pm\infty}
$$
for all $s$.
\label{lem:invariant limit}
\end{lemma}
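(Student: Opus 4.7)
My plan is to prove the backward-limit case $\phi_-(s,A_{-\infty})=A_{-\infty}$; the forward-limit case $\phi_+(s,A_{+\infty})=A_{+\infty}$ is entirely analogous, replacing $-\infty$ by $+\infty$ throughout. I would establish the two inclusions separately, using the sequential characterisation of the upper limit together with continuous dependence of ODE solutions on initial data and parameters over a time window of fixed length.

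For $\phi_-(s,A_{-\infty})\subseteq A_{-\infty}$, fix $y=\phi_-(s,x)$ with $x\in A_{-\infty}$. By the sequential characterisation of $\limsup$, there exist $t_n\to-\infty$ and $x_n\in A_{t_n}$ with $x_n\to x$. Invariance of $\mathcal{A}$ yields $z_n:=\Phi(t_n+s,t_n,x_n)\in A_{t_n+s}$. Substituting $\tau=t-t_n$, the curve $\tau\mapsto\Phi(t_n+\tau,t_n,x_n)$ on $[0,s]$ solves $\dot u=f(u,\Lambda(r(t_n+\tau)))$ with $u(0)=x_n$, whereas $\phi_-(\tau,x)$ solves $\dot v=f(v,\lambda_-)$ with $v(0)=x$. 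Since $\Lambda(r(t_n+\tau))\to\lambda_-$ uniformly for $\tau\in[0,s]$ (as the compact set $r\cdot[0,s]+rt_n$ slides off to $-\infty$) and $x_n\to x$, standard $C^1$ continuous dependence yields $z_n\to\phi_-(s,x)=y$. Because $z_n\in A_{t_n+s}$ with $t_n+s\to-\infty$, we conclude $y\in A_{-\infty}$.

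For the reverse inclusion $A_{-\infty}\subseteq\phi_-(s,A_{-\infty})$, take $y\in A_{-\infty}$ and pick $y_n\in A_{t_n}$ with $t_n\to-\infty$, $y_n\to y$. Using invertibility of the solution cocycle of the nonautonomous ODE, set $x_n:=\Phi(t_n-s,t_n,y_n)$, which lies in $A_{t_n-s}$ by invariance of $\mathcal{A}$. The same continuous-dependence argument, now run over the window $[t_n-s,t_n]$, shows $x_n\to\phi_-(-s,y)$; since $t_n-s\to-\infty$, this limit lies in $A_{-\infty}$. Then $\phi_-(s,\phi_-(-s,y))=y$ exhibits $y$ as an element of $\phi_-(s,A_{-\infty})$.

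The main obstacle is the continuous-dependence step, namely showing that the nonautonomous cocycle over the sliding window $[t_n,t_n+s]$ (or its time-reversed analogue) converges to the autonomous limit flow $\phi_-(s,\cdot)$. This relies on two ingredients: uniform convergence of $\Lambda(r(t_n+\cdot))$ to $\lambda_-$ on $[0,s]$, which is immediate from the parameter-shift definition together with compactness of $[0,s]$; and a Gronwall-type estimate propagating the small perturbations in parameter and initial datum through a bounded time span. Here the boundedness hypothesis on $A_{\pm\infty}$ enters: it confines the sequences $(x_n)$ and $(z_n)$ to a bounded region where the $C^1$ regularity of $f$ supplies a uniform Lipschitz constant, so the hypotheses of continuous dependence on parameters apply on the common set where all trajectories of interest live.
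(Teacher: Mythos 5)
Your proof is correct, and it takes a genuinely different route from the paper's. The paper works at the level of sets: it introduces the tail unions $U_\tau=\bigcup_{t\le\tau}A_t$, invokes a quantitative lemma of Rasmussen giving uniform convergence of the nonautonomous cocycle to the autonomous limit flow on compact time windows over a compact spatial region, and packages this as the Hausdorff estimate $d_H(\phi_-(T,U_{\tau'-T}),U_{\tau'})\le 2\epsilon$, from which invariance of $A_{-\infty}=\bigcap_\tau\overline{U_\tau}$ is extracted. You instead argue pointwise through the sequential characterisation of the upper limit ($x\in A_{-\infty}$ iff $x_n\to x$ with $x_n\in A_{t_n}$, $t_n\to-\infty$) and invoke the same underlying ingredient as plain continuous dependence of ODE solutions on initial data and parameters over a bounded window, with $\Lambda(r(t_n+\cdot))\to\lambda_-$ uniformly and boundedness of $A_{-\infty}$ supplying a common compact region and hence a uniform Lipschitz constant. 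Your two-inclusion structure also handles the reverse containment more transparently: the paper proves $\phi_-(T,A_{-\infty})\subseteq A_{-\infty}$ only for $T>0$ and then asserts tersely that ``$\phi(T,\cdot)$ is a diffeomorphism, hence the result holds for all $T$'', whereas you explicitly propagate $y_n$ backward to $x_n=\Phi(t_n-s,t_n,y_n)\in A_{t_n-s}$, land $\phi_-(-s,y)$ in $A_{-\infty}$, and recover $y=\phi_-(s,\phi_-(-s,y))$. One cosmetic point: your first inclusion is phrased for the window $[0,s]$, implicitly $s>0$; either note that the identical argument runs on $[s,0]$ when $s<0$, or observe that your reverse-inclusion paragraph already proves the first inclusion with $s$ replaced by $-s$, so the two paragraphs together cover all real $s$.
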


\begin{proof}
We prove in detail for the past limit case: the future limit proof follows similarly. Let us denote $U_{\tau}:=\bigcup_{t\leq \tau} A_t$ so that $A_{-\infty}=\bigcap_{\tau<0}\overline{U_{\tau}}$. Note that 
$$
U_{\tau'}\subset U_{\tau}~~\mbox{ and }~~~\Phi(\tau,\tau',U_{\tau'})=U_{\tau}
$$
for any $\tau'<\tau$, where the first containment follows from the definition of $U$, and the second from the invariance of $A_t$ under the cocycle. In particular, the second statement can be written
$$
d_H( \Phi(\tau',\tau'-T,U_{\tau'-T}),U_{\tau'})=0
$$
for any $\tau'$ and $T>0$.
Pick any compact and convex set $K$ that contains a neighbourhood of $A_{-\infty}$.  Applying Lemma 5.1(i) of \citet{Rasmussen2007}, means that for any $T>0$ and $\epsilon>0$ there is a $\tau_0<-T$ such that
$$
\|\Phi(\tau'+t,\tau',x) - \phi_{-}(t,x)\|<\epsilon
$$
for all $\tau'<\tau_0$, $0\leq t\leq T$ and $x\in K$. 

Pick a sufficiently negative $\tilde{\tau}<0$ that $U_{\tilde{\tau}}\subset K$ and fix any $T>0$. For every $\epsilon>0$ there is an $\tau_0(\epsilon)<\min(-T,\tilde{\tau})$ such that 
$$
\|\Phi(\tau',\tau'-T,x) - \phi_{-}(T,x)\|<\epsilon
$$
for all $\tau'<\tau_0(\epsilon)$ and $x\in U_{\tau'-T}$. This implies that
$$
d_H(\phi_-(T,U_{\tau'-T}),\Phi(\tau',\tau'-T,U_{\tau'-T}))<2\epsilon
$$
for any $\tau'<\tau_0(\epsilon)$. Applying the triangle inequality
\begin{eqnarray*}
d_H(\phi_-(T,U_{\tau'-T}),U_{\tau'}) &\leq &
d_H(\phi_-(T,U_{\tau'-T}),\Phi(\tau',\tau'-T,U_{\tau'-T}))+d_H(\Phi(\tau',\tau'-T,U_{\tau'-T}),U_{\tau'})
\end{eqnarray*}
implies that for all $\tau'<\tau_0(\epsilon)$ we have
$$
d_H(\phi_-(T,U_{\tau'-T}),U_{\tau'}) \leq 2\epsilon
$$
in particular for $x\in A_{-\infty}$ and fixed $T>0$ implies $\phi(T,x)\in A_{-\infty}$. Allowing $T$ to vary gives the proof for all $T>0$: note that $\phi(T,.)$ is a diffeomorphism and hence the result holds for all $T$.
\end{proof}

The next definition generalizes Definition~2.3 in \citet{Ashwin2017}.

\begin{definition} \label{Def.pb} 
Suppose that $\mathcal{A}=\{A_t\}_{t\in \R}$ is a compact $\Phi$-invariant nonautonomous set. We say $\mathcal{A}$ is a \textit{(local) pullback attractor} that {\em attracts }$U$ if there exists a bounded open set $U$ containing the upper backward limit of $\mathcal{A}$ that satisfies  
$$
\lim_{s\rightarrow -\infty} d(\Phi(t,s,U),A_t)=0
$$
for all $t \in \R$.
\end{definition}

The following result generalizes Theorem 2.2 in \citet{Ashwin2017} - it gives a sufficient condition that there is a local pullback attractor whose backward limit is contained within an attractor of the past limit system.

\begin{theorem} \label{thm:pbpastlimit}
Suppose that $A_-$ is an asymptotically stable attractor for the past limit system $\phi_-$. Then there is local pullback attractor of  \eqref{eq:nonautonomous} whose (upper) backward limit  is contained in $A_-$.
\end{theorem}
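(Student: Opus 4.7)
The plan is to construct a pullback attractor whose fibres, for sufficiently negative time, lie in any prescribed neighbourhood of $A_-$, by first producing a pullback-absorbing compact set from a forward trapping neighbourhood of $A_-$ under $\phi_-$, and then applying the standard $\omega$-limit construction. The central analytical tool is the same continuous-dependence estimate \citet[Lemma 5.1(i)]{Rasmussen2007} used in the proof of Lemma~\ref{lem:invariant limit}: $\Phi(s+t,s,\cdot)$ converges to $\phi_-(t,\cdot)$ uniformly on compact phase-space and time sets as $s\to-\infty$.

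First I would use the asymptotic stability of $A_-$ to pick $\eta>0$ and then, by iteration of $\phi_-$, find $0<\delta<\eta$ and $T>0$ with $\phi_-(T,K)\subset\mathcal{N}_{\delta/2}(A_-)$ where $K:=\overline{\mathcal{N}_\delta(A_-)}$. Applying Rasmussen's estimate with $\epsilon<\delta/2$ yields $s_0<0$ so that $\Phi(s+T,s,K)\subset K$ for all $s<s_0$; iterating via the cocycle property gives $\Phi(t,t-nT,K)\subset K$ for all sufficiently negative $t$ and sufficiently large integers $n$, establishing that $K$ is pullback-absorbing.

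Next I would define the candidate attractor fibrewise by
$$
A_t:=\bigcap_{\sigma\ge 0}\ \overline{\bigcup_{s\ge\sigma}\Phi(t,t-s,K)}
$$
for $t<s_0+T$, and extend to all $t\in\R$ through the invariance relation $A_t:=\Phi(t,t_0,A_{t_0})$ for a fixed $t_0<s_0+T$. Standard $\omega$-limit arguments then show that $\mathcal{A}=\{A_t\}_{t\in\R}$ is compact, nonempty, $\Phi$-invariant, and pullback-attracts the open bounded set $U:=\mathcal{N}_\delta(A_-)$ in the sense of Definition~\ref{Def.pb}; moreover, the absorbing property forces $A_t\subset K$ for all sufficiently negative $t$.

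Finally, to prove $A_{-\infty}\subset A_-$, fix $\varepsilon>0$, pick $T'>0$ with $\phi_-(T',K)\subset\mathcal{N}_{\varepsilon/2}(A_-)$ (possible by asymptotic stability), and apply the continuous-dependence estimate at horizon $T'$ to deduce $\Phi(t+T',t,K)\subset\mathcal{N}_\varepsilon(A_-)$ for all sufficiently negative $t$. Combining with $A_t\subset K$ and the invariance $A_{t+T'}=\Phi(t+T',t,A_t)$, we obtain $A_{t+T'}\subset\mathcal{N}_\varepsilon(A_-)$ for $t$ sufficiently negative, hence $A_{-\infty}\subset\overline{\mathcal{N}_\varepsilon(A_-)}$; letting $\varepsilon\to 0$ and using compactness of $A_-$ completes the proof. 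The main obstacle I expect is that asymptotic (rather than exponential) stability of $A_-$ supplies no uniform contraction rate, so the inclusion $A_{-\infty}\subset A_-$ must be extracted by varying the horizon $T'$ in Rasmussen's estimate rather than by a single quantitative bound; the remaining steps are routine nonautonomous dynamical systems bookkeeping.
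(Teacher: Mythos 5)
Your proposal is correct and follows essentially the same route as the paper: you build the same pullback-absorbing set from a forward-trapping neighbourhood of $A_-$ via Rasmussen's continuous-dependence estimate (which is exactly what the paper's Lemma~\ref{lem:absorbing} does), define the pullback attractor by the $\omega$-limit / intersection construction (your $\bigcap_\sigma\overline{\bigcup_{s\ge\sigma}\Phi(t,t-s,K)}$ coincides, after a change of variable $s'=t-s$, with the paper's formula \eqref{eq:pbattractor}), and obtain $A_{-\infty}\subset A_-$ by letting the time horizon in Rasmussen's estimate grow, just as the paper's proof does using Lemma~\ref{lem:absorbing} with $\delta\to 0$. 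The only cosmetic difference is that the paper states the fibre formula directly for all $t\in\R$ while you extend by invariance from large negative $t$, and the paper records an auxiliary $\eta$-independence statement (Lemma~\ref{lem:uniqueness}) that you sidestep by working with a fixed absorbing set.
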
  

We delay the proof of Theorem~\ref{thm:pbpastlimit} to give two lemmas that will be used in the proof.

\begin{lemma}
\label{lem:absorbing}
Assume that  $A_-$ is an asymptotically stable attractor for the past limit system $\phi_-$. Then there is $\tilde{\eta}>0$ such that for all $\eta \in (0,\tilde{\eta}]$ and all $\delta>0$ there exist $\tau>0$ and $\tilde{\tau}>0$ such that $\Phi(t,s,\mathcal{N}_\eta(A_-)) \subset \mathcal{N}_\delta(A_-) $, for all $t$ and $s$ such that $s<t-\tilde{\tau}$ and $t<-\tau$.
\end{lemma}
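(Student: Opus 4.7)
The plan is to combine the autonomous asymptotic stability of $A_-$ under $\phi_-$ with the approximation estimate from Lemma~5.1(i) of \citet{Rasmussen2007} (already invoked in the proof of Lemma~\ref{lem:invariant limit}), which furnishes uniform convergence $\Phi(s+u,s,x)\to\phi_-(u,x)$ as $s\to-\infty$ for $u$ in compact intervals and $x$ in compact sets. This is the bridge connecting the nonautonomous cocycle to the past limit system.

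Fix $\tilde\eta>0$ so that $\overline{\mathcal{N}_{\tilde\eta}(A_-)}$ lies in the basin of attraction of $A_-$. Given $\delta>0$ and $\eta\in(0,\tilde\eta]$, I would use the stability clause of asymptotic stability to pick an intermediate scale $\delta_1\in(0,\delta/2)$ with $\phi_-(u, \overline{\mathcal{N}_{\delta_1}(A_-)})\subset \mathcal{N}_{\delta/2}(A_-)$ for every $u\ge 0$, and then use the attraction clause together with compactness of $\overline{\mathcal{N}_\eta(A_-)}$ to produce a uniform absorbing time $T>0$ with $\phi_-(T,\overline{\mathcal{N}_\eta(A_-)})\subset \mathcal{N}_{\delta_1/2}(A_-)$; set $\tilde\tau:=T$. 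Applying Rasmussen's estimate with accuracy $\delta_1/2$, horizon $T$, and compact set $K=\overline{\mathcal{N}_\eta(A_-)}$ then yields $\tau_1>0$ with
\[
\Phi(s+T,s,\mathcal{N}_\eta(A_-))\subset \mathcal{N}_{\delta_1}(A_-) \qquad (s<-\tau_1).
\]

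The hard part is transporting the orbit from time $s+T$ to time $t$ while keeping it inside $\mathcal{N}_\delta(A_-)$: the interval $[s+T,t]$ can be arbitrarily long, so Rasmussen's estimate does not apply directly. My approach is to construct an open neighbourhood $W$ of $A_-$ with $\overline{\mathcal{N}_{\delta_1}(A_-)}\subset W\subset \mathcal{N}_\delta(A_-)$ that is \emph{robustly} positively invariant, meaning positively invariant not only under $\phi_-$ but under $\dot x=f(x,\lambda)$ for every $\lambda$ with $|\lambda-\lambda_-|<\sigma$, for some $\sigma>0$. Such a $W$ can be produced as a sublevel set of a smooth Lyapunov function supplied by a converse Lyapunov theorem for asymptotically stable compact sets, with robustness following from continuity of $\nabla V\cdot f$ in $\lambda$. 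Choosing $\tau\ge \tau_1$ so large that $|\Lambda(rt')-\lambda_-|<\sigma$ for every $t'\le -\tau$, the constraints $s<t-\tilde\tau$ and $t<-\tau$ combine to give $\Phi(t,s,\mathcal{N}_\eta(A_-))\subset W\subset \mathcal{N}_\delta(A_-)$. The construction of $W$ is the one non-routine ingredient; every other step reduces to continuous dependence on initial conditions and parameters together with the definitions of stability and attraction.
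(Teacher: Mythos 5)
Your proof is correct, but it takes a genuinely different route from the paper's, and in fact a more careful one. The paper applies Rasmussen's approximation estimate directly over the interval $[u-k,u]$: for each fixed $k>\tilde\tau$ it produces a $\tau$ (which, through the horizon length in Rasmussen's lemma, a priori depends on $k$) such that $\Phi(u,u-k,\mathcal{N}_\eta(A_-))$ is $\delta/2$-close to $\phi_-(k,\mathcal{N}_\eta(A_-))$ for $u<-\tau$, and then combines this with the attraction clause via a triangle inequality. The conclusion, however, requires a single $\tau$ that works for \emph{all} $k>\tilde\tau$ simultaneously, and the paper does not address this uniformity issue. You have correctly identified the real obstacle --- the time interval $[s+\tilde\tau,\,t]$ over which the orbit must be controlled can be arbitrarily long, so the finite-horizon approximation estimate cannot be used there directly --- and you resolve it by an entirely different device: a sublevel set $W$ of a converse-Lyapunov function that is robustly positively invariant for $\dot x=f(x,\lambda)$ with $\lambda$ $\sigma$-close to $\lambda_-$. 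After a finite-horizon Rasmussen step brings the orbit into $W$, robust positive invariance keeps it in $W\subset\mathcal{N}_\delta(A_-)$ for the entire (possibly unbounded) remainder of the interval, since $\Lambda(rt')$ stays $\sigma$-close to $\lambda_-$ for all $t'\le t<-\tau$. This is valid; the price is the invocation of a converse Lyapunov theorem for compact asymptotically stable invariant sets, a nontrivial ingredient the paper never uses. Two small remarks on presentation: you should fix the sublevel set $W=\{V<c\}\subset\mathcal{N}_\delta(A_-)$ first and only then choose $\delta_1$ small enough that $\overline{\mathcal{N}_{\delta_1}(A_-)}\subset W$ (the reverse order you describe need not be realisable for an arbitrarily prescribed $\delta_1$); and the preliminary stability-clause step that produces your initial $\delta_1$ is then redundant, since robust positive invariance of $W$ does all the work of propagating from time $s+T$ to time $t$.
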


\begin{proof}
Asymptotic stability of $A_-$ means that there is a $\tilde{\eta}>0$ such that for any $0 < \eta<\tilde{\eta}$ we have
$$
\lim_{s\rightarrow\infty} d(\phi_-(s,\N_{\eta}(A_-)),A_-) = 0.
$$ 
This means that for any $\delta > 0$ there is $\tilde{\tau}>0$ such that  $d(\phi_-(k,\mathcal{N}_{\eta}(A_-)),A_-)< \delta/2$ for all $k>\tilde{\tau}$. By \citet{Rasmussen2008} Lemma 5.1, for any $\delta>0$ and $k>\tilde{\tau}$ there is $\tau>0$ such that
$$
d_H(\Phi(u,u-k,\mathcal{N}_{\eta}(A_-)),\phi_-(k,\mathcal{N}_{\eta}(A_-)))< \delta/2
$$  
for all $u<-\tau$. The triangle inequality of Hausdorff semi-distance implies
\begin{eqnarray*}
d\left(\Phi(u,u-k,\mathcal{N}_\eta(A_-)),A_-\right) &\leq& d\left(\Phi(u,u-k,\mathcal{N}_\eta(A_-)), \phi_-(k,\mathcal{N}_\eta(A_-)) \right) \\
&+& d\left(\phi_-(k,\mathcal{N}_\eta(A_-), A_-\right) < \delta /2 + \delta /2 = \delta.
\end{eqnarray*}

for all $u$ and $k$ such that $u<-\tau$ and $u-k<u-\tilde{\tau}$, which completes the proof. 
\end{proof}

We define  $\mathcal{A}^{[\Lambda,r,A_-]}:=\{A_t^{[\Lambda,r,A_-]}\}_{t\in\R}$ where:
\begin{equation} \label{eq:pbattractor}
A_t^{[\Lambda,r,A_-]}:=\bigcap_{\tau>0} \overline{\bigcup_{s\leq -\tau} \Phi\left(t,s,\mathcal{N}_{\eta}(A_-)\right)} 
\end{equation}
for all $t\in\R$ (recall that $\Phi$ is the solution of (\ref{eq:nonautonomous}) and so depends on $r$ and $\Lambda$).

\begin{lemma} 
Assume that $A_-$ is asymptotically stable attractor for the past limit system $\phi_{\lambda_-}$. Then the nonautonomous set \eqref{eq:pbattractor} is independent of $\eta$ for all $ \eta \in (0, \tilde{\eta}]$. 
\label{lem:uniqueness} 
\end{lemma}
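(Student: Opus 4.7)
The plan is to prove set equality by showing both inclusions between the sets $A_t^{\eta_1}$ and $A_t^{\eta_2}$ produced by \eqref{eq:pbattractor} for any two radii $0<\eta_1\leq\eta_2\leq\tilde\eta$. The inclusion $A_t^{\eta_1}\subseteq A_t^{\eta_2}$ is immediate from monotonicity: the nested inclusion $\N_{\eta_1}(A_-)\subseteq\N_{\eta_2}(A_-)$ is preserved under $\Phi(t,s,\cdot)$, under unions over $s\leq-\tau$, under closure, and under intersection over $\tau>0$.

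The reverse inclusion $A_t^{\eta_2}\subseteq A_t^{\eta_1}$ is where Lemma~\ref{lem:absorbing} does the work. The idea is to ``preprocess'' the larger neighbourhood into the smaller one at a sufficiently negative intermediate time and then use the cocycle property to transport the result to the (possibly large and positive) fibre time $t$. Concretely, apply Lemma~\ref{lem:absorbing} with $\eta=\eta_2$ and $\delta=\eta_1$ to obtain $\tau,\tilde\tau>0$ such that
$$\Phi(t',s,\N_{\eta_2}(A_-))\subseteq \N_{\eta_1}(A_-)\qquad\text{whenever } t'<-\tau \text{ and } s<t'-\tilde\tau.$$
Now fix $t\in\R$ and an arbitrary threshold $T'>0$ appearing in the definition of $A_t^{\eta_1}$. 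Choose the matching threshold $T:=T'+\tau+\tilde\tau+1$ in the definition of $A_t^{\eta_2}$, and for each $s\leq -T$ set $t_s:=s+\tilde\tau+1$. A short check shows that $t_s<-\tau$, $t_s\leq -T'$, and $s<t_s-\tilde\tau$, so Lemma~\ref{lem:absorbing} together with the cocycle identity gives
$$\Phi(t,s,\N_{\eta_2}(A_-)) = \Phi\bigl(t,t_s,\Phi(t_s,s,\N_{\eta_2}(A_-))\bigr) \subseteq \Phi(t,t_s,\N_{\eta_1}(A_-)) \subseteq \bigcup_{s'\leq -T'}\Phi(t,s',\N_{\eta_1}(A_-)).$$
Taking the union over $s\leq -T$, closing, and then intersecting over all $T'>0$ will yield $A_t^{\eta_2}\subseteq A_t^{\eta_1}$ at every $t\in\R$, completing the equality.

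The main subtlety I anticipate is exactly this mismatch between the lemma and the definition: Lemma~\ref{lem:absorbing} only controls dynamics at sufficiently negative times, whereas the fibre of $A_t^\eta$ is evaluated at an arbitrary $t$. Splitting the cocycle at an $s$-dependent intermediate time $t_s$ sidesteps this: the segment $[s,t_s]$ is where absorption into $\N_{\eta_1}(A_-)$ takes place, while the final segment $[t_s,t]$ simply transports forward a set that is already $\eta_1$-trapped. Once the choice of $T$ in terms of $T'$ is fixed, the rest of the argument is routine set-theoretic manipulation of unions, closures, and nested intersections.
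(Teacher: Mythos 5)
Your proposal is correct and follows essentially the same route as the paper: the easy inclusion $A_t^{\eta_1}\subseteq A_t^{\eta_2}$ by monotonicity, and the reverse inclusion by invoking Lemma~\ref{lem:absorbing} to absorb the larger neighbourhood into the smaller one at a sufficiently negative intermediate time and then transporting forward via the cocycle property. Your explicit choice of the $s$-dependent splitting time $t_s$ and the threshold $T=T'+\tau+\tilde\tau+1$ simply makes the union/closure/intersection bookkeeping more careful than the paper's slightly more informal chain of inclusions.
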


\begin{proof}
Consider any $\eta$ and $\eta'$ in $(0,\tilde{\eta}]$ and assume $\eta' < \eta$ {\em w.l.o.g} and define
\begin{align*}
A_t &= \bigcap_{\tau>0} \overline{\bigcup_{s\leq -\tau} \Phi\left(t,s,\mathcal{N}_{{\eta}}(A_-)\right)},\\
A_t' &= \bigcap_{\tau>0} \overline{\bigcup_{s\leq -\tau} \Phi\left(t,s,\mathcal{N}_{\eta'}(A_-)\right)}.
\end{align*}
Since $\mathcal{N}_{\eta'}(A_-) \subset \mathcal{N}_{\eta}(A_-)$ we have  
$$
\bigcap_{\tau>0} \overline{\bigcup_{s\leq -\tau} \Phi\left(t,s,\mathcal{N}_{\eta'}(A_-)\right)} \subset \bigcap_{\tau>0} \overline{\bigcup_{s\leq -\tau} \Phi\left(t,s,\mathcal{N}_{\eta}(A_-)\right)}
$$
which means $A_t' \subset {A}_t$. We also have to show that $A_t \subset A_t'$. By Lemma~\ref{lem:absorbing} there exist $\tau$, $\tilde{\tau} > 0$, such that  $\Phi(k,s,\mathcal{N}_\eta(A_-)) \subset \mathcal{N}_{\eta'}(A_-) $, for all $s<k-\tilde{\tau}$ and $k<-\tau$.

Now, for all $t\in\mathbb{R}$  
\begin{eqnarray*}
\Phi\left(t,k,\Phi\left(k,s,\mathcal{N}_\eta(A_-)\right)\right) &\subset & \Phi\left(t,k,\mathcal{N}_{\eta'}(A_-)\right),\\
\Phi\left(t,s,\mathcal{N}_\eta(A_-)\right) &\subset & \Phi\left(t,k,\mathcal{N}_{\eta'}(A_-)\right),\\
\bigcap_{\tau>0} \overline{\bigcup_{s<-\tau-\tilde{\tau}}\Phi\left(t,s,\mathcal{N}_\eta(A_-)\right)} &\subset & \bigcap_{\tau>0} \overline{\bigcup_{k<-\tau}\Phi\left(t,k,\mathcal{N}_{\eta'}(A_-)\right)},\\
A_t &\subset& A_t'.
\end{eqnarray*}
Therefore $A_t=A_t'$ for all $t \in \mathbb{R}$ and so  $\mathcal{A}^{[\Lambda,r,A_-]}$ is independent of choice of $\tilde{\eta}>\eta>0$.
\end{proof}

\begin{proof}{\em (For Theorem~\ref{thm:pbpastlimit})} 
See Appendix~\ref{AppProofTh2.2} for a detailed proof.
\end{proof}

By Lemma~\ref{lem:invariant limit}, $A_{-\infty}^{[\Lambda,r,A_-]}$ is invariant for the past limit system, if $A_-$ is minimal (for example, if it is an equilibrium or periodic orbit) then $A_{-\infty}^{[\Lambda,r,A_-]}= A_-$. We believe that  $A_{-\infty}^{[\Lambda,r,A_-]}= A_-$ in more general cases but are not clear whether additional hypotheses are needed to prove this. However, we note that, as pointed out by an anonymous referee, the proof of Theorem~\ref{thm:pbpastlimit} can be obtained by adapting \citet[Theorem 2.35 and Corollary 2.36]{Rasmussen2007} to this setting. 

\section{Tracking and rate-induced tipping of pullback attractors}
\label{sec:rtipping}

Theorem~\ref{thm:pbpastlimit} highlights that the backward limit of a pullback attractor for the parameter shift system (\ref{eq:nonautonomous}) is related to an attractor of the past limit system. Whether the forward limit of the pullback attractors is related to an attractor of the future limit system, is a more subtle question that depends on choice of rate $r>0$:  

\begin{definition} \label{Def:tipping}
Suppose that $(A(\lambda),\lambda)\subset \mathcal{X}_{stab}$ is a branch of attractors that are exponentially stable for $\lambda\in[\lambda_-,\lambda_+]$. Define $A_{\pm}:=A(\lambda_{\pm})$ and consider the pullback attractor $\mathcal{A}^{[r,\Lambda,A_-]}$ with past limit $A_-=A(\lambda_-)$.
\begin{itemize}
\item We say there is (end-point) {\em tracking} for the system \eqref{eq:nonautonomous} from $A_-$ for some $\Lambda$ and $r>0$ if 
$$
A_{+\infty}^{[\Lambda,r,A_-]} \subset A_+.
$$

\item We say there is {\em partial tipping} if
$$
(A_+)^c \cap A_{+\infty}^{[\Lambda,r,A_-]}\neq \emptyset,~\mbox{ and }~A_+\cap A_{+\infty}^{[\Lambda,r,A_-]}\neq  \emptyset. 
$$
\item We say there is {\em total tipping} if
$$
A_+\cap A_{+\infty}^{[\Lambda,r,A_-]}= \emptyset.
$$ 
\item We say there is {\em tipping} for the system if there is partial or total tipping, i.e. if
$$
A_{+\infty}^{[\Lambda,r,A_-]} \not\subset A_+.
$$
\item
For a given $A_-$ and $\Lambda$ there will be partition of the positive half axis into disjoint subsets where there is tracking, partial tipping or total tipping. If $r_c$ is in the closure of two of these sets we say it is a {\em critical rate} or {\em threshold} for {\em rate-induced tipping.}
\item It is possible to have an isolated value of the rate $r_0$ that gives partial tipping but that separates two subsets of $r>0$ where the system has end-point tracking. In this case we say the system has {\em invisible tipping}.      
\end{itemize}
\end{definition}

By analogy with \citet[Theorem 2.4]{Ashwin2017} we expect for sufficiently small $r>0$ that the pullback attractor will track (i.e. remain close to) the branch $A(\lambda)$.  This is expressed more precisely in the following result.

\begin{theorem} \label{thm:tracking}
Suppose that $(A(\lambda),\lambda)\subset \mathcal{X}_{\stab}$ is a branch of attractors that is uniformly stable for $\lambda\in[\lambda_-,\lambda_+]$ and suppose $\Lambda$ is a parameter shift. Define $A_{\pm}=A(\lambda_{\pm})$ and the pullback attractor $\mathcal{A}^{[r,\Lambda,A_-]}$ with fibres $A_t^{[r,\Lambda,A_-]}$ as in (\ref{eq:pbattractor}). Then for all $\epsilon>0$ there exists a $\delta>0$ such that
$$
d\left(A^{[\Lambda,r,A_-]}_t,A(\Lambda(rt)\right)<\epsilon
$$
for all $0<r<\delta$ and $t\in\R$. Moreover, there is a $\delta>0$ such that there is  tracking for all $0<r<\delta$.
\end{theorem}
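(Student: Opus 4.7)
The plan is to combine the uniform exponential contraction supplied by uniform stability of the branch with a Gronwall-type perturbation estimate for the parameter-shift flow, and then iterate over successive time windows of fixed length $T$ so that the estimate holds uniformly in $t\in\R$. Throughout the proof I would denote $\lambda(t):=\Lambda(rt)$ and work with the one-sided Hausdorff distance $d(\cdot,\cdot)$.

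First, since $[\lambda_-,\lambda_+]$ is compact and $A(\lambda)$ is continuous in $\lambda$, for the given $\epsilon>0$ I would choose $\delta_1>0$ such that $|\lambda-\lambda'|\leq \delta_1$ implies $d_H(A(\lambda),A(\lambda'))<\epsilon/4$. Uniform stability provides constants $C\geq 1$, $\mu>0$, $\eta>0$ (independent of $\lambda$) governing the exponential rate in \eqref{eq:linstab}. Pick $T>0$ large enough that $Ce^{-\mu T}\eta<\epsilon/4$, so that after time $T$ the autonomous flow $\phi_\lambda$ pulls $\mathcal{N}_{\eta}(A(\lambda))$ inside $\mathcal{N}_{\epsilon/4}(A(\lambda))$, uniformly in $\lambda\in[\lambda_-,\lambda_+]$. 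Next, using $C^1$-dependence of $f$ on its arguments on the compact set $\overline{\mathcal{N}_{\eta}(\bigcup_\lambda A(\lambda))}\times [\lambda_-,\lambda_+]$, a standard Gronwall argument gives a constant $K>0$ such that whenever $|\Lambda(r\sigma)-\lambda_0|<\delta_1$ throughout $\sigma\in[s,s+T]$ and $x(\sigma)$ stays in this set, the non-autonomous orbit differs from the frozen orbit by at most $K\delta_1 T$. Finally, I would shrink $\delta$ so that for all $0<r<\delta$, (i) $rT\sup_\tau|\Lambda'(\tau)|<\delta_1$ (so $\lambda(t)$ moves less than $\delta_1$ over any window of length $T$), and (ii) $K\delta_1 T<\epsilon/4$.

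With these constants, I would iterate a one-step claim: if at time $t_0$ the fibre satisfies $A_{t_0}^{[\Lambda,r,A_-]}\subset \mathcal{N}_{\eta}(A(\lambda(t_0)))$, then applying exponential contraction by the frozen flow $\phi_{\lambda(t_0)}$ over $[t_0,t_0+T]$, followed by the Gronwall perturbation and the $\epsilon/4$-shift of $A(\lambda(\cdot))$, yields $A_{t_0+T}^{[\Lambda,r,A_-]}\subset \mathcal{N}_{\epsilon/4+\epsilon/4+\epsilon/4}(A(\lambda(t_0+T)))\subset \mathcal{N}_{\eta}(A(\lambda(t_0+T)))$ (after shrinking $\epsilon$ if needed below $\eta/4$). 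The base case is supplied by Theorem~\ref{thm:pbpastlimit} and Lemma~\ref{lem:invariant limit}: since $A_{-\infty}^{[\Lambda,r,A_-]}\subset A_-=A(\lambda_-)$, for sufficiently negative $t_0$ the fibre lies in $\mathcal{N}_{\eta/2}(A(\lambda(t_0)))$. The iteration then propagates the estimate to all $t\in\R$.

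For the tracking assertion, observe that as $t\to+\infty$ we have $\lambda(t)\to\lambda_+$, hence $A(\lambda(t))\to A_+$ in Hausdorff distance, so the fibre estimate forces $A_{+\infty}^{[\Lambda,r,A_-]}\subset \mathcal{N}_{\epsilon}(A_+)$. Taking $\epsilon$ smaller than the size of the isolating neighbourhood of the exponentially stable $A_+$, and combining with Lemma~\ref{lem:invariant limit} (which makes $A_{+\infty}^{[\Lambda,r,A_-]}$ fully $\phi_+$-invariant), the maximal $\phi_+$-invariant set in that neighbourhood is $A_+$ itself, so $A_{+\infty}^{[\Lambda,r,A_-]}\subset A_+$, i.e. end-point tracking. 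The main obstacle I anticipate is Step~2 of the iteration: a crude Gronwall bound accumulates error over infinitely many windows, so the argument must carefully exploit the fact that within each window the exponential contraction factor $Ce^{-\mu T}<1/2$ strictly dominates the $O(r)$ drift, so the error saturates rather than accumulating. Balancing the three constants $T$, $\delta_1$ and the width of the tubular neighbourhood so that this saturation occurs inside the uniform stability tube is the delicate part.
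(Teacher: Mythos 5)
Your proposal follows essentially the same route as the paper's proof: establish a forward-invariant tube of controlled width around the quasistatic branch $A(\Lambda(rt))$ by playing the uniform exponential contraction of the branch against a small-$r$ perturbation bound over a fixed time window, anchor the iteration in the past via $A_{-\infty}^{[\Lambda,r,A_-]}\subset A_-$ from Theorem~\ref{thm:pbpastlimit}, and propagate forward. Your Gronwall-on-a-window estimate and the paper's appeal to uniform closeness of $\Phi$ and $\phi_\lambda$ over a window of length $s$ (chosen with $Ce^{-\mu s}<1$) are two phrasings of the same bound, and both must be made uniform in $t\in\R$ via the boundedness of $\Lambda'$ and compactness of the branch, which you handle a little more explicitly. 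The one genuinely different step is your conclusion of end-point tracking: to upgrade ``$A_{+\infty}^{[\Lambda,r,A_-]}$ lies in a small neighbourhood of $A_+$'' to the containment $A_{+\infty}^{[\Lambda,r,A_-]}\subset A_+$, you invoke the $\phi_+$-invariance of the forward limit (Lemma~\ref{lem:invariant limit}) together with the standard fact that an exponentially stable set is the maximal invariant set in a sufficiently small neighbourhood of itself. The paper instead claims $d(A_{+\infty}^{[\Lambda,r,A_-]},A_+)=0$ directly via the sets $C_\tau$, but for a fixed $r<\delta(\epsilon)$ the tube estimate only yields $d(A_{+\infty}^{[\Lambda,r,A_-]},A_+)\le\epsilon/(2C)$, not zero; your invariance-plus-maximality argument is the clean way to finish, and is a worthwhile refinement to keep.
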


\begin{proof}
Since $A(\lambda)$ is uniformly stable for all $\lambda\in \left[\lambda_-,\lambda_+\right]$ then there exist $\mu>0$, $\eta>0$ and $C\geq 1$ (which we fix from hereon in the proof) such that 
\begin{equation}
d\left( \phi_\lambda(t,x),A(\lambda)  \right) < C e^{-\mu t} d\left( x, A(\lambda)\right)
\label{eq:linstab2}
\end{equation}
for all $x\in\mathcal{N}_\eta \left(A(\lambda)\right)$ and $t>0$. 

Pick any $0<\epsilon<\eta$, consider any $t\in\R$ and $\lambda=\Lambda(rt)$. By (\ref{eq:linstab2}) $d\left(\phi_{\lambda}\left(s,x \right),A(\lambda)\right)<\epsilon e^{-\mu s}/3$, for all $x\in\mathcal{N}_{\epsilon/C}\left(A(\Lambda(rt)) \right)$ and $s>0$. In particular, we can pick $s>0$ independent of $t$ such that $e^{-\mu s}=1/C$ and so 
$$
\phi_{\lambda}\left(s, \mathcal{N}_{\epsilon/C}\left(A(\lambda) \right)\right)\subset \mathcal{N}_{\epsilon/(3C)}\left(A(\lambda) \right).
$$

By the continuity of $\Phi$, for all $s>0$ and $t\in\R$ there exits $\delta_1>0$ such that for all $0<r<\delta_1$ and $x\in \mathcal{N}_{\epsilon}(A(\lambda))$
$$
\|\phi_{\lambda}(s,x) - \Phi(t+s,t,x) \|<\epsilon/(3C).
$$
Again by the continuity of $A(\lambda)$ there exist $\delta_2 >0 $ such that for all $t\in\mathbb{R}$ and $0<r<\delta_2$ 
$$
d_H\left(A(\Lambda(r(t+s))),A(\lambda)\right)<\epsilon/(3C).
$$
Now set $\delta=\min\left\{\delta_1,\delta_2\right\}$, then for all $x\in\mathcal{N}_{\epsilon/C}\left(A(\lambda)\right)$, $t\in\R$ and $0<r<\delta$, 
\begin{align*}
&d\left( \Phi\left( t+s,t,x \right) , A\left(\Lambda \left(r(t+s) \right)
\right)  \right)\\
<&d\left( \Phi(t+s,t,x), \phi_{\lambda}(s,x) \right) + d\left( \phi_{\lambda} (s,x), A(\lambda) \right) \\
& +d\left(  A(\lambda),A(\Lambda(r(t+s))\right) \\
\leq& \| \phi_\lambda(s,x) - \Phi(t+s,t,x) \| + d\left(\phi_\lambda(s,x),A(\lambda) \right)\\
&+ d_H\left(A\left(\Lambda(r(t+s))\right),A(\lambda)\right) \\ 
<& \epsilon/(3C)+\epsilon/(3C)+\epsilon/(3C) = \epsilon/C
\end{align*}
which follows from the triangle inequality for Hausdorff semi-distance, $d(u,v)=\|u-v\|$ for all $u,v \in\mathbb{R}^n$ and $d\left(A,B\right)\leq d_H\left(A,B\right)$ for all $A$, $B$ compact subsets of $\mathbb{R}^n$. This means that for all $0<r<\delta$ and $t\in\R$ there is an $s>0$ such that
$$
\Phi(t+s,t,\mathcal{N}_{\epsilon/C}(A(\Lambda(rt)))\subset \mathcal{N}_{\epsilon/C}(A(\Lambda(r(t+s))).
$$
By Theorem \ref{thm:pbpastlimit}, $A_{-\infty}^{[\Lambda,r,A_-]}\subset A_-$ which means for all $\epsilon >0$ there is an $\tau>0$ such that 
$d( A_t^{[\Lambda,r,A_-]},A(\Lambda(rt)) )< \epsilon/C $ for all $t<-\tau$. Therefore, for all $0<\epsilon<\eta$ there exists a $\delta >0$ such that for all $0<r<\delta$ and $t\in \R$ we have
$$
d\left( A_t^{[\Lambda,r,A_-]}, A(\Lambda(rt)) \right)<\epsilon/3C.
$$ 
To prove the second part of the theorem, we define
$$
C_\tau = \overline{\bigcup_{s>\tau} A_s^{[\Lambda,r,A_-]}}.
$$
Note that $C_{u}\subset C_{\tau}$ for any $u>\tau$ and $A_{+\infty}^{[\Lambda,r,A_-]}=\bigcap_{\tau>0}C_{\tau}$. Moreover we have $
d_H(C_{\tau},A_{+\infty}^{[\Lambda,r,A_-]})\rightarrow 0 
$ as $\tau\rightarrow \infty$. 

From before, for any $\epsilon >0$ and $t\in \R$ there is $\delta >0$ such that 
$$
d(A_t^{[\Lambda,r,A_-]}, A(\Lambda(rt))) < \epsilon/2C
$$
for all $0<r<\delta$. 

Now from the fact that $d_H (A(\Lambda(rt)),A_+)\rightarrow 0$ as $t\rightarrow \infty$ and the definition of $C_\tau$, we have $ d(C_\tau, A_+) \rightarrow 0$ as $\tau \rightarrow \infty$. Hence, by the triangle inequality of Hausdorff semi-distance
$$
d(A_{+\infty}^{[\Lambda,r,A_-]}, A_+)=0.
$$
Which finishes the proof.
\end{proof}

Although Theorem~\ref{thm:tracking} means that a pullback attractor will track a branch of ``sufficiently stable'' attractors for the nonautonomous system for small enough rates, there is no guarantee this holds for larger rates. Rate-induced tipping occurs precisely when tracking fails to occur. 

\section{An example with partial and total rate-induced tipping}
\label{sec:example}

In this section we consider an example where there is a branch of periodic attractors, and find cases of partial and total tipping. More precisely, consider the following (nonautonomous) system: 

\begin{equation}
\label{eq:pb_example}
\dot{z} = F(z-\Lambda(rt))
\end{equation}

where $z=x+iy \in \mathbb{C}$, the parameter shift $\Lambda(\tau)= \lambda_{\max} \left(\tanh(\tau \lambda_{\max} /2 )+1 \right)/2$ limits to $0$ in the past and $\lambda_{\max}$ in the future, and $F(z)$ is defined by
\begin{equation}
F(z)=(a+i\omega)z -b|z|^2 z + |z|^4 z
\label{eq:F}
\end{equation}
for $a,b,\omega,r$ and $\lambda_{\max} \in \R$, $r,\lambda_{\max} >0$: we set $b=1$ in what follows. Note that $\dot{z}=F(z)$ can be thought of a normal form for a Bautin bifurcation, where a Hopf bifurcation changes criticality at $b=0$. One can view the system autonomously as:

\begin{equation}
\label{eq:zLambda}
\left.\begin{array}{rcl}
\dot{z}		 &=&F(z-\Lambda)\\
\dot{\Lambda}&=&r\Lambda(\lambda_{\max}-\Lambda)
\end{array}\right\}
\end{equation}

 Previous works \cite{Ashwin2011,Perryman2015} has used parameter shift of a subcritical Hopf normal form to investigate rate-induced tipping. Figure~\ref{fig:intro} illustrates numerically that the dynamics of this system may show tracking, and both partial or total tipping from a branch of periodic orbits.

For $r=0$ and any fixed $\Lambda$ there are bifurcation points at $a=0$ and $a=0.25$, that are Hopf and saddle-node bifurcations of periodic orbits respectively. For $0<a<0.25$ the system has an unstable equilibrium point $Z(\lambda):=\lambda+0i$, as well as both stable and unstable periodic orbit.
We denote the radius of the unstable periodic orbit by $R_u:=(1+\sqrt{1-4a})/2$ and the radius of the stable periodic orbit by $R_s:=(1-\sqrt{1-4a})/2$. Note that the stable periodic orbit is
$\Gamma^s(\lambda):=\left\{ \|z-\lambda\|^2=R_s \right\} $ and the unstable periodic orbit is $\Gamma^u(\lambda):=\left\{ \|z-\lambda\|^2=R_u \right\}$. 

For a solution of \eqref{eq:zLambda} and $r>0$ there are two stationary values of $\Lambda$: $\lambda_-=0$ and $\lambda_+=\lambda_{\max}$. Hence in general there are six invariant sets associated with those two limiting values, and we denote them by $Z_-,\Gamma^s_-$ and $\Gamma_-^u$ associated with $\Lambda=\lambda_-=0$ and  $Z_+,\Gamma^s_+$ and $\Gamma_+^u$ associated with $\Lambda=\lambda_+=\lambda_{\max}$. Theorem~\ref{thm:tracking} implies that the upper forward limit of the pullback attractor $\mathcal{A}^{[\Lambda,r,\Gamma^u_-]}$ is the attracting periodic orbit $\Gamma_+^s$ of the future limit systems, for all small enough $r$. However, there can be up to three critical rates of $r$ for all fixed values of the parameters $a, \omega, \lambda_{\max}$ that can give partial, total and even invisible tipping. 

\subsection{Pullback attractors, tipping, and invariant manifolds}

Writing $W^u(X)$ to denote the unstable and $W^s(X)$ the stable manifold of the hyperbolic invariant set $X$. Moreover, we denote the tangent space of $W^{s,u}(X)$ at the point $p \in W^{s,u}(X) $ by $T_pW^{s,u}(X)$. Note that, $W^s(\Gamma^u_+)$ forms the basin boundary of $\Gamma^s_+$, and the branch of stable periodic orbits is uniformly stable.

The various cases of tracking and tipping can be understood in terms of the unstable manifolds of these invariant sets\cite{Aulbach2006}. More precisely, the pullback attractor of \eqref{eq:pb_example} consists of sections of $W^u(\Gamma^s_-)$ for \eqref{eq:zLambda} and we can classify the tracking/tipping as follows:
\begin{itemize}
\item 
If $\Gamma^s_+\subset W^u(\Gamma^s_-)$ then there is end-point tracking of the branch of periodic solutions $\Gamma^s(\Lambda(rt))$.
\item 
If $[\Gamma^s_+]^c \cap W^u(\Gamma^s_-)\neq \emptyset$  then there is tipping: if in addition $\Gamma^s_+\cap W^u(\Gamma^s_-) = \emptyset $ then there is total tipping for this $r$, otherwise it is partial tipping.
\item
This means that, if there is total tipping or tracking then
$$
W^u(\Gamma^s_-)\cap W^s(\Gamma^u_+)=\emptyset.
$$
while if
$$
W^u(\Gamma^s_-)\cap W^s(\Gamma^u_+)\neq \emptyset.
$$
and the intersection is transverse then there is partial tipping.
\item
Hence, if $r$ is a threshold between tracking and partial tipping or between partial and total tipping then
$$
W^u(\Gamma^s_-)\cap W^s(\Gamma^u_+)\neq \emptyset
$$
with non-transverse intersection along a unique trajectory, more precisely this means that at a typical point $p\in W^u(\Gamma^s_-)\cap W^s(\Gamma^u_+)$ we have
\begin{equation}
\dim\left(T_p W^u(\Gamma^s_-)\cap T_pW^s(\Gamma^u_+)\right)=2.
\label{eq:nontransverse}
\end{equation} 

\item If $r$ such that 
$$
W^u(\Gamma^s_-)\cap W^s(Z_+)\neq \emptyset
$$
then this is generically an isolated point in $r$, and hence a invisible tipping.
\end{itemize}

Figure~\ref{fig:diff_rates} illustrates some examples of numerical approximations showing trajectories and the relation between the stable manifold of the unstable equilibrium and unstable periodic orbit and the pullback attractors.

\begin{figure*}
	{\includegraphics[scale = 0.55]{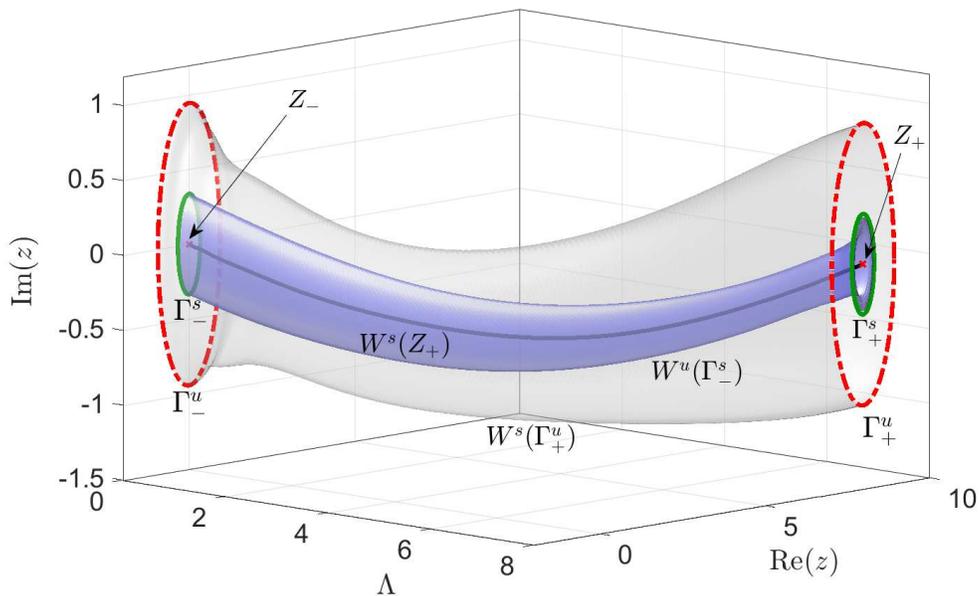}}
	
	\caption{Numerical approximations of the pullback attractor $\mathcal{A}^{[\Lambda,r,\Gamma_-^s]}$ ( for system (\ref{eq:zLambda}) ) for $a=0.1$, $r=0.1$, $b=1$, $\lambda_{\max}=8$ and $\omega=3$. The graph of the pullback attractor (inner dark tube) over $\Lambda$ is $W^u(\Gamma^s_-)$. In this case $r$ is chosen small enough that there is tracking of the periodic attractor according to Theorem~\ref{thm:tracking}. The outer grey tube shows $W^s(\Gamma^u_+)$ whilst the inner black line is $W^s(X_+)$. The red circles indicate $\Gamma^u_{\pm}$, the green circles indicate $\Gamma^s_{\pm}$ and the red points indicate $Z_{\pm}$.
	}     \label{fig:pb_example}
\end{figure*}

\begin{figure*}[!]
    	\subcaptionbox{} [0.48\linewidth]
    {\includegraphics[scale = 0.33]{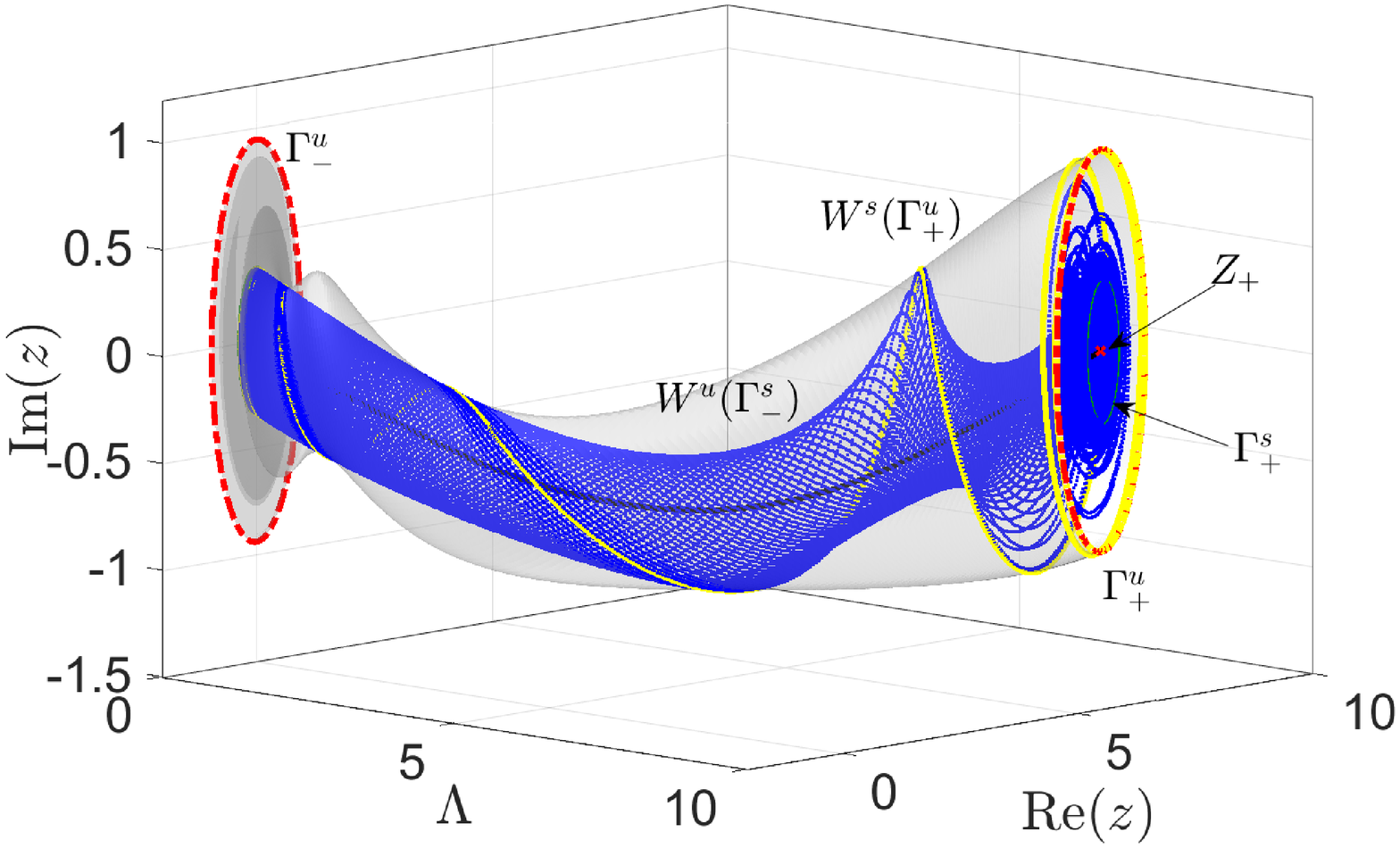}}
    	\subcaptionbox{} [0.48\linewidth]
    {\includegraphics[scale = 0.33]{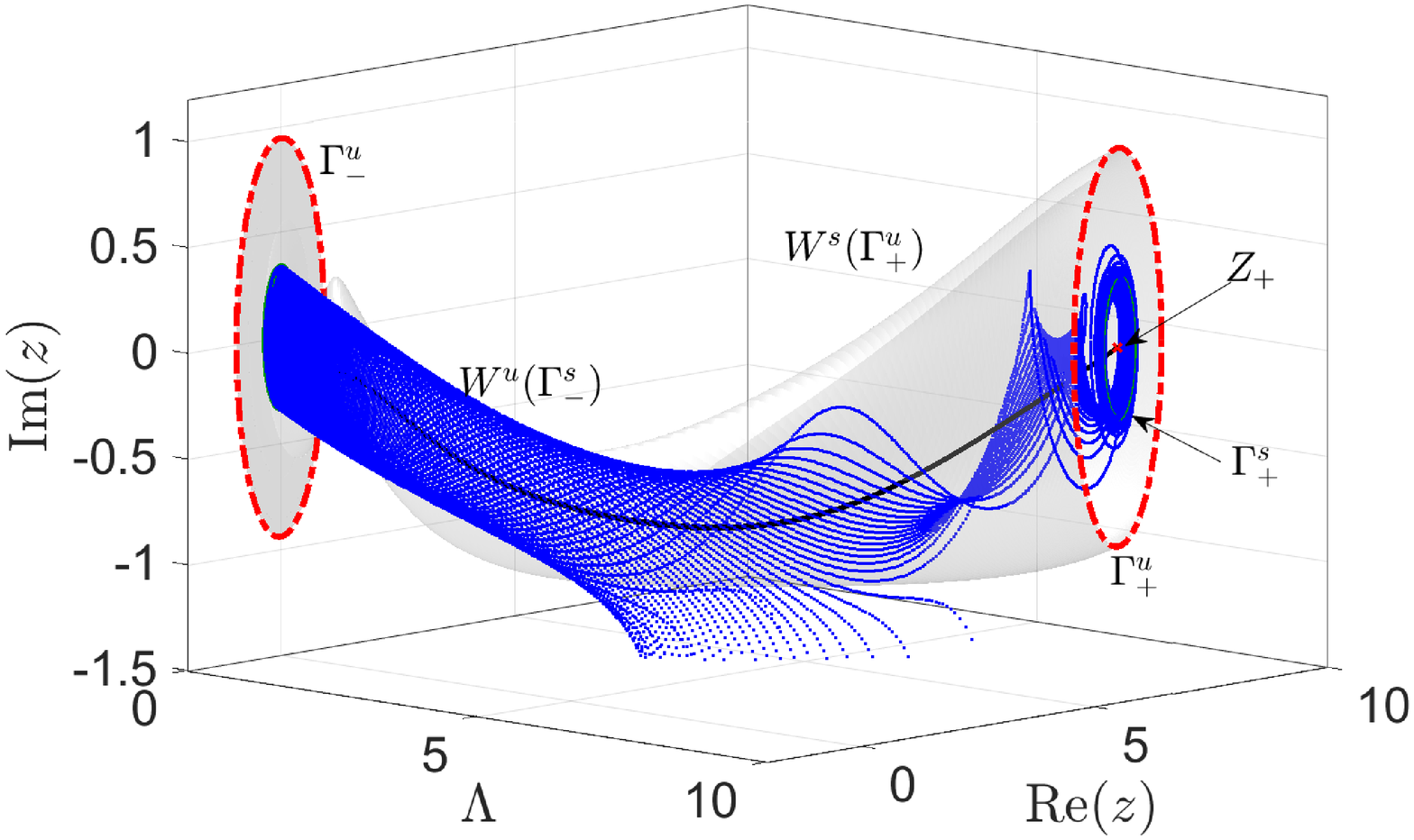}}
    	\subcaptionbox{} [0.48\linewidth]
    {\includegraphics[scale = 0.33]{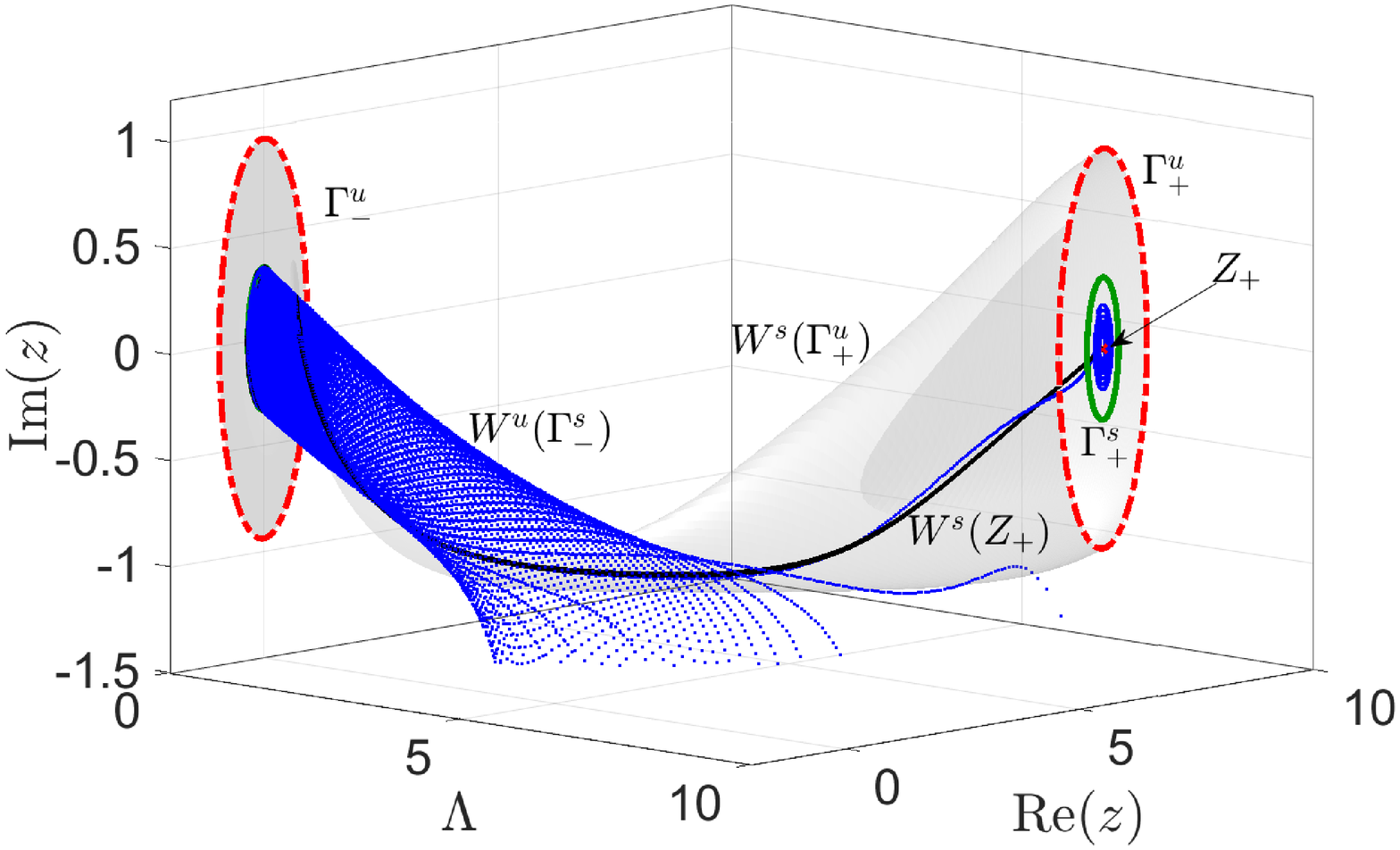}}
    	\subcaptionbox{} [0.48\linewidth]
    {\includegraphics[scale = 0.33]{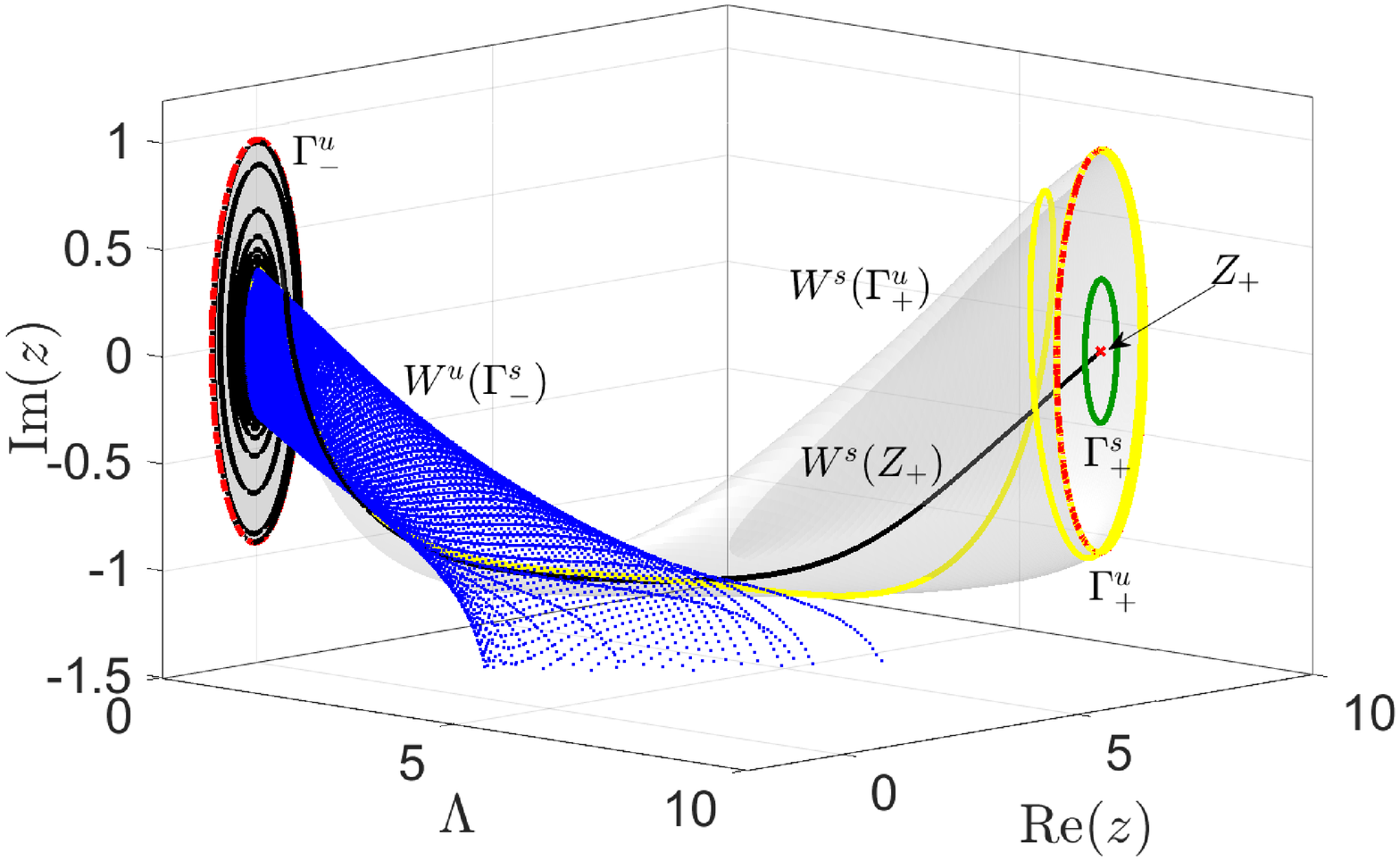}}

 \caption{Numerical approximations of the pullback attractor as in Figure~\ref{fig:pb_example} but for different examples  of tipping. (a) $r=0.13321$ at the threshold of partial tipping: there is a single connection (yellow) in $W^u(\Gamma^s_-)\cap W^s(\Gamma^u_+)$ with non-transverse intersection. (b) $r = 0.15$ in the region of partial tipping: some of the trajectories in blue on the pullback attractor track while others escape. (c) $r = 0.198422$, showing existence of a PtoE connection (black). (d) $r = 0.201226$ showing total tipping.
 }     \label{fig:diff_rates}
\end{figure*}

\subsection{Rate-induced tipping as bifurcations of PtoP and PtoE connections}

As outlined above, it is possible to find thresholds of rate-induced tipping by considering certain PtoP and PtoE heteroclinic connections, analogous to \citet[Proposition 4.1]{Perryman2015}. An efficient way of doing this is Lin's method \cite{Homburg2010} that involves solving three point boundary value problems with suitable boundary conditions that give the desired connection: see for example \cite{Knobloch2010a, Krauskopf2008, Zhang2012} for details. We outline our numerical implementation of Lin's method more details are included in Appendix~\ref{AppLinMeth}. Throughout we fix $b=1$, $\omega =3$ and $\lambda_{\max} =8$.

\citet{Zhang2012} give a systematic method to find a PtoP connection where the intersection between the tangent space of the unstable and the stable manifold is one dimensional. However, for our critical rates even though the PtoP connection is one-dimensional, the intersection of the tangent spaces is of dimension two, and solving \citet[equations (6) - (11)]{Zhang2012} give criteria for codimension-zero connections.  To find the critical rates of transition to partial and to total tipping we solve the adjoint variational equation (AVE) along the connection to allow us to test (\ref{eq:nontransverse}).

Let us denote the system \eqref{eq:zLambda} by 
\begin{equation} \label{bvp.ode}
\dot{w}=G(w;\mu)
\end{equation}
where $w(t)=(x(t),y(t),\Lambda(t)) \in \R^3$, $z(t)=x(t)+iy(t)$, $\mu=(a,r)\in \R^2$ and $G:\R^5 \rightarrow \R^3$ is the vector field of the system. The adjoint variational equation of a solution $w(t)$ of (\ref{bvp.ode}) at the parameter value $\mu_0$ is given by \citep{Homburg2010}: 
\begin{equation} \label{adjvareq}
\dot{u}= -G_u(w,\mu_0)^{\mathrm{tr}} u 
\end{equation} 
with solution $u(t)$, where $G_u(w,\mu_0)$ is the Jacobian matrix of the function $G(.,\mu_0)$ over $w$ and $A^\mathrm{tr}$ is the transpose of the matrix $A$. Let us assume that $T>0$ is a (sufficiently large) integration time, $g_1(\vartheta)\in \Gamma_-^s$, $g_2(\varphi)\in\Gamma_+^u$, $\gamma^{\pm}_{s,c,u}$ give the stable/center/unstable eigendirections of  $\Gamma_-^s$, $\Gamma_-^u$ respectively for $0<\vartheta$, $\varphi<2\pi$, and ${v_{1,2u}}$ are the unstable eigenvectors of $Z_+$. We can write the BVPs of the relevant connections as the following:

We locate and continue a PtoE connection $W^u(\Gamma^s_-)\cap W^s(X_+)\neq \emptyset$ (corresponding to invisible tipping) by choosing a section $\Lambda=\lambda_{\max}/2$ and a Lin basis vector $\ell$ and solving
\begin{equation}
\begin{aligned}
 \dot{w}^-(s) &= T G(w^-(s);\mu), \\
 \dot{w}^+(s) &= T G(w^+(s);\mu),
 \end{aligned}
\label{eq:PtoEEQ}
\end{equation}
 on $0<s<1$ with $T>0$ sufficiently large and boundary conditions 
\begin{equation}
\begin{aligned}
 0 &= \big\langle w^-(0)-g_1(\vartheta), \gamma_s^-    (\vartheta) \big\rangle , &
 0 &= \big \langle w^-(0)-g_1(\vartheta), \gamma_c^- (\vartheta) \big\rangle, \\
 0 &= \big\langle w^+(1)-Z_+, {v_{1u}} \big\rangle , &
 0 &= \big\langle w^+(1)-Z_+, {v_{2u}} \big\rangle, \\
 0 &= \big\langle w^-(1)-(0,0,\lambda_{\max}/2),(0,0,1)  \big\rangle , &
 \xi \ell & =w^+(0) - w^-(1).
\label{eq:PtoEBC}
\end{aligned}
\end{equation}

We locate a codimension zero PtoP connection in $W^u(\Gamma^s_-)\cap W^s(\Gamma^u_+)$  by similarly choosing a section $\Lambda=\lambda_{\max}/2$ and solving
\begin{align}
\begin{aligned}
\dot{w}^-(s) &= T G(w^-(s);\mu)\\
\dot{w}^+(s) &= T G(w^+(s);\mu),\label{eq:PtoPEQ}
\end{aligned}
\end{align}
on $0<s<1$ for some sufficiently large $T>0$ with boundary conditions
\begin{equation}
\begin{aligned}
0 &=\big\langle w^-(0)-g_1(\vartheta), \gamma_s^-(\vartheta) \big\rangle, &
0 &=\big \langle w^-(0)-g_1(\vartheta), \gamma_c^-(\vartheta) \big\rangle, \\
0&=\big\langle w^-(1)-g_2(\varphi), \gamma_u^+(\varphi) \big\rangle , &
0 &=\big\langle w^-(1)-g_2(\varphi), \gamma_c^+(\varphi) \big\rangle, \\
0 &=\big\langle w^-(1)-(0,0,\lambda_{\max}/2),(0,0,1) \big\rangle ,&
\xi \ell &= w^+(0) - w^-(1).
\label{eq:PtoPBVP1}
\end{aligned}
\end{equation}
This can be extended to find the codimension one PtoP connection (corresponding to a boundary between partial tipping and either tracking or total tipping) by solving (\ref{eq:PtoPEQ},\ref{eq:PtoPBVP1}) and in addition the adjoint variational equation
\begin{align}
\begin{aligned}
\dot{u}^-(s) &= -TG_u(w^-(s),\mu)^\mathrm{tr} u^-(s)\\
\dot{u}^+(s)&= -TG_u(w^+(s),\mu)^\mathrm{tr} u^+(s)\label{eq:PtoPEQAD}
\end{aligned}
\end{align}
with boundary conditions
\begin{equation}
\begin{aligned}
0 &=\big\langle u^-(0),\gamma^-_u(\vartheta) \big\rangle,&
0 &=\big\langle u^-(0),\gamma^-_c(\vartheta) \big\rangle,\\
0 &=\big\langle u^+(1),\gamma^+_s(\varphi) \big\rangle,&
0 &=\big\langle u^+(1),\gamma^+_c(\varphi) \big\rangle,\\
0 &=u^-(1)-u^+(0),&
1 &=\big\langle u^-(1) , (1,0,0)\big\rangle.
\label{eq:PtoPBVP2}
\end{aligned}
\end{equation}
More details are in Appendix~\ref{AppLinMeth}: note that $\xi$ is a parameter that is determined by solving the BVP: one can think of $\xi(r,a): \mathbb{R}^2 \rightarrow \mathbb{R}$ as a function whose zeros give the desired connections.   

\begin{figure*}[!]

   	\subcaptionbox{\label{fig:paraplane1}} [0.32\linewidth]
    {\includegraphics[scale = 0.26]{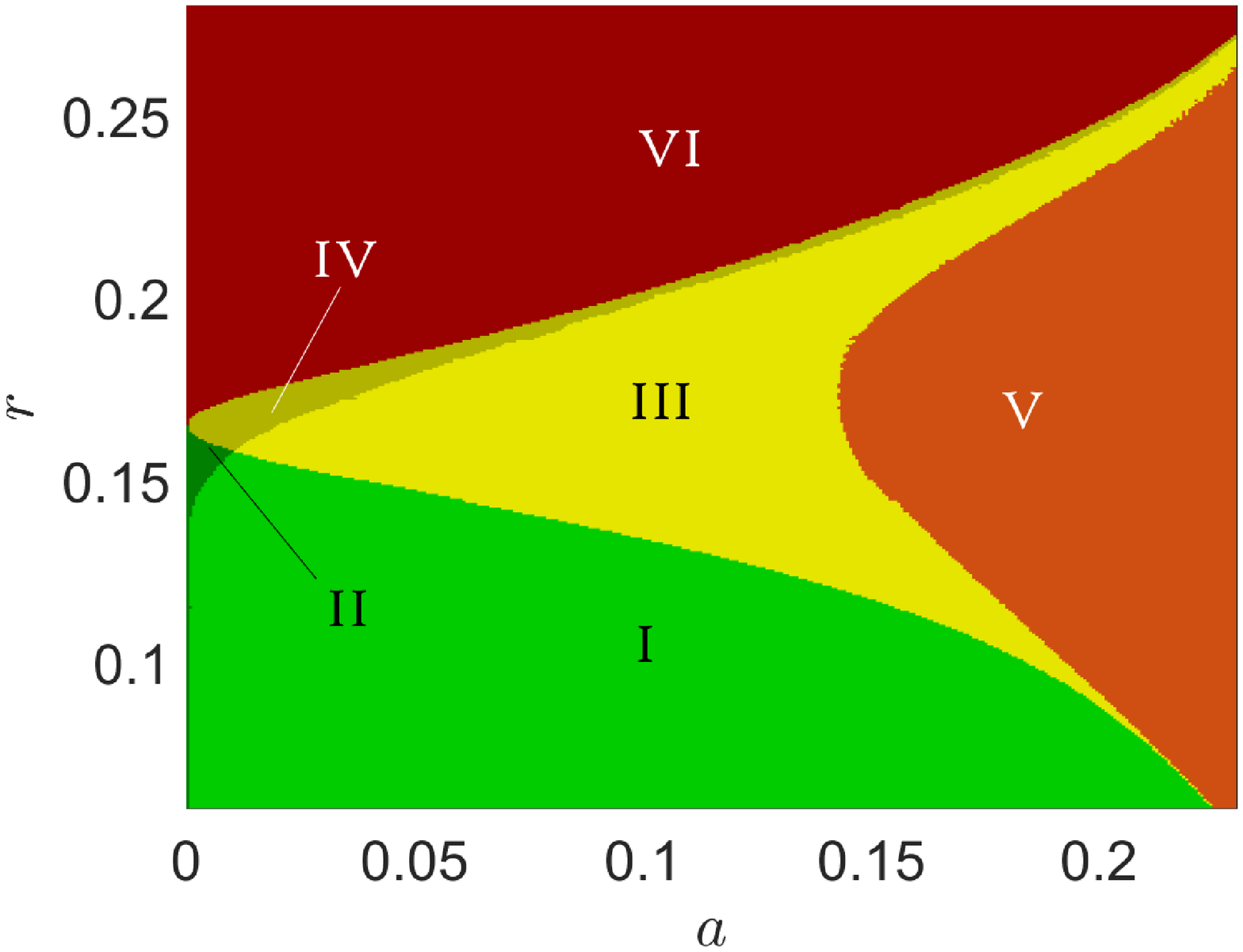}}
    	\subcaptionbox{\label{fig:paraplane2} } [0.32\linewidth]
    {\includegraphics[scale = 0.26]{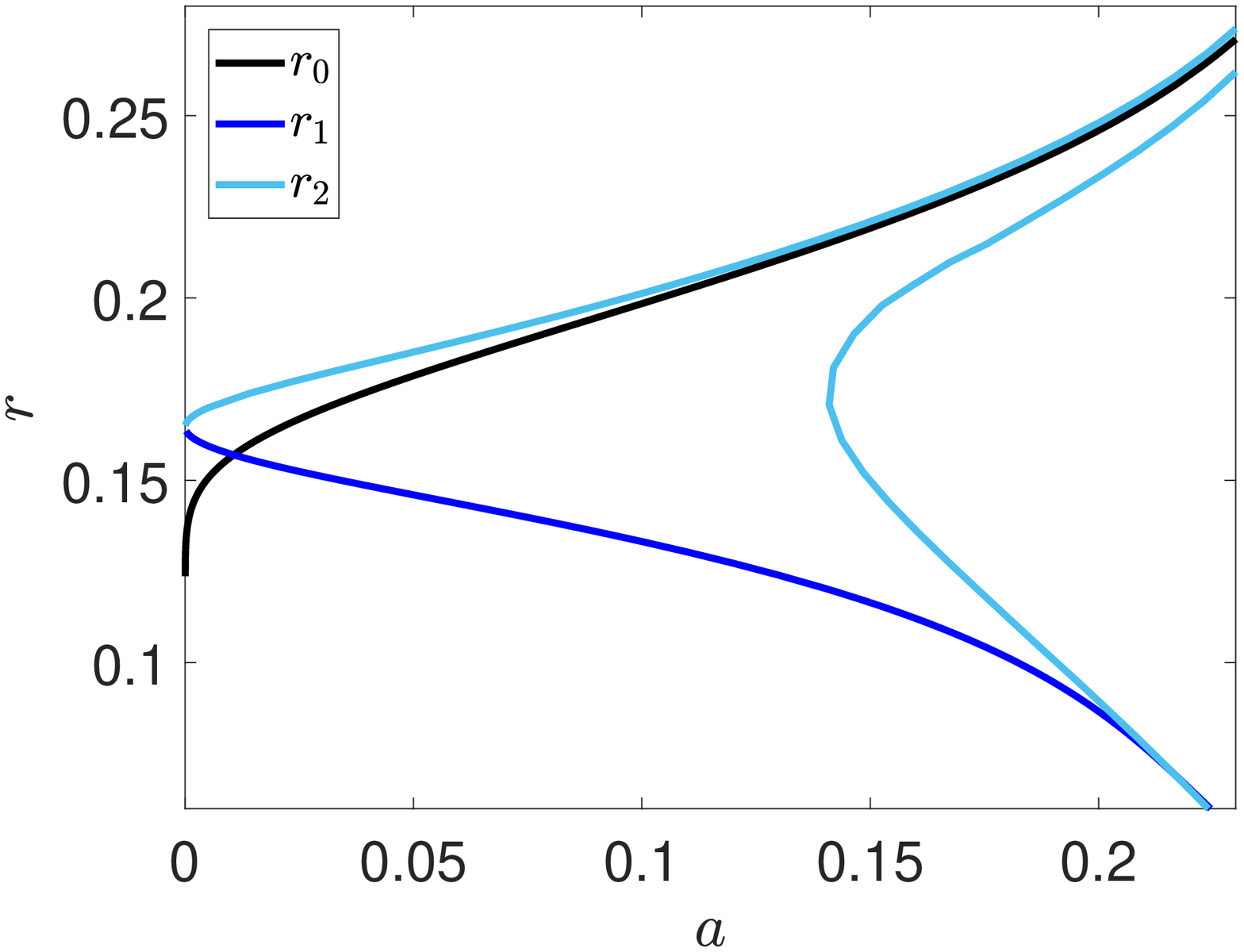}}
    \subcaptionbox{\label{fig:paraplane3}} [0.32\linewidth]
    {\includegraphics[scale = 0.26]{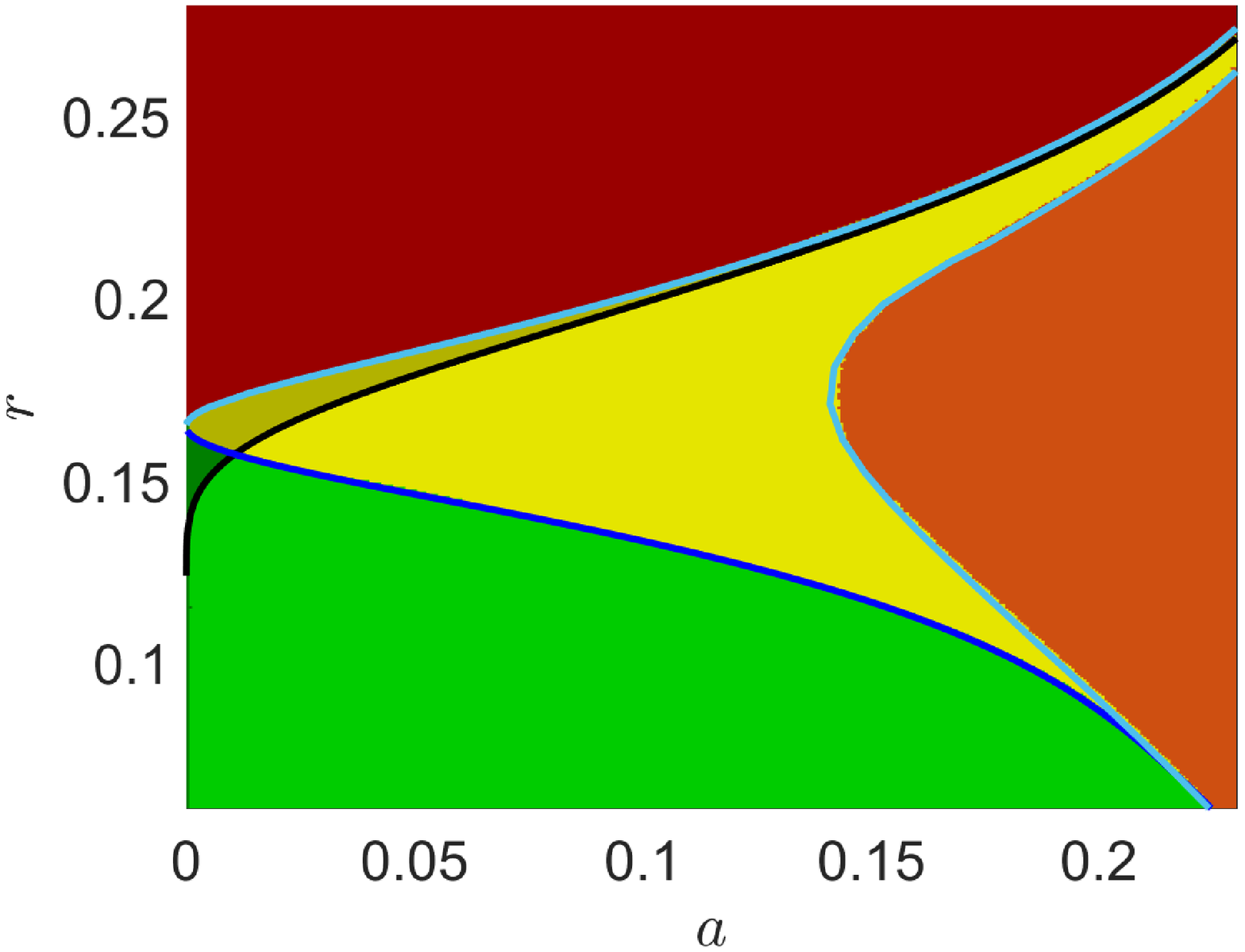}}
    \caption{The two parameter plane of system \eqref{eq:zLambda} showing regions of different tracking/tipping behaviour  (a) is calculated by directly approximating a collection of initial conditions on the pullback attractor and determining their fate under the dynamics of the system and shows six regions where the system has qualitatively different behaviour (see Figure \ref{fig:manifsect}). The curves in (b) are calculated using Lin's method and show the locations of these transitions: $r_{1,2}$ are the thresholds of partial and total tipping respectively and $r_0$ gives PtoE connection causing a invisible tipping for $0<a<0.0157$. In (c) they are superimposed. }
    
    \label{fig:paraplane}
\end{figure*}

Solving the system (\ref{eq:PtoPEQ},\ref{eq:PtoPBVP1},\ref{eq:PtoPEQAD},\ref{eq:PtoPBVP2}) allows one to determine and continue the codimension-one PtoP connections that give the thresholds of partial and total tipping. As initial solution we solve the codimension-zero problem (\ref{eq:PtoPEQ},\ref{eq:PtoPBVP1}) and continuing it along $r$ to arrive at a fold where the codimension-one connection exists. Figure~\ref{fig:paraplane} illustrates ($a,r$)-parameter plan for (\ref{eq:zLambda}) in the case $b=1$, $\omega=3$ and $\lambda_{\max}=8$ calculated by Lin's method and compares it with a direct shooting algorithm described in Appendix~\ref{appShooting}. Figure \ref{fig:manifsect} shows the behaviour of \eqref{eq:zLambda} in each different region of the parameter plan by looking at a section of the manifolds $W^u(\Gamma_-^s)$, $W^s(\Gamma_+^u)$ and $W^s(Z_+)$.

\section{Discussion}
\label{sec:discuss}

In this paper we discuss the phenomena of R-tipping from periodic orbits in the setting of parameter shift systems. We extend results of \citet[Theorems 2.2 and 2.4]{Ashwin2017} for equilibrium branches of attractors and show that  there exists a pullback attractor of \eqref{eq:nonautonomous} whose upper backward limit is contained within an attractor of the past limit system. Under additional assumptions on the stability of the branch we show that the pullback attractor tracks the branch for small rate $r>0$. Theorem~\ref{thm:tracking} states that, for a range of small values of $r$, the forward limit of the pullback attractor $A_{\infty}^{[\Lambda,r,A_-]}$ is the same. However, there is no guarantee of this with large enough $r$. Indeed, if there is rate-induced tipping then this is not the case. 

More generally, we note that the local pullback attractor can be used to classify a number of different types of tipping (see Definition~\ref{Def:tipping}) and use the example in Section~\ref{sec:example} to illustrate some differences. We have been able to present partial tipping, total tipping in addition to the tracking case. In order to investigate and continue the thresholds of partial and total tipping numerically for \eqref{eq:zLambda} we calculate PtoP and PtoE connections using Lin's method. 

The integration time $T$ in (\ref{eq:PtoEEQ}, \ref{eq:PtoPBVP1}) would need to be chosen to be proportional to $1/a$ near the Hopf ($a=0$) and $1/r$ near the fold of limit cycles ($a=0.25$) to resolve the details. Hence, any fixed $T$ will give errors in PtoE and PtoP connections in regions close to $a=0$ and $0.25$. Moreover, as $a\rightarrow 0.25$, $\|R_s - R_u\|\rightarrow 0$ which means it became very difficult for the pullback attractor $\mathcal{A}^{[\Lambda,r,\Gamma^s_-]}$ to track the branch $\Gamma^s(\Lambda(rt))$ even for very small $r>0$ (i.e as $a \rightarrow 0.25$, $r_1\rightarrow 0$ as well as $\|r_1-r_2 \|\rightarrow 0$).

The $(a,r)$-parameter plane (Figure~ \ref{fig:paraplane}) shows that the upper parts of regions III and IV of partial tipping thin out for $a>0.15$. We explain this as follows: the threshold of partial and total tipping get close together because of the fold of limit cycles and the PtoE connection curve is trapped between these. Even for relatively large rate the connection between $\Gamma_-^s$ and $Z_+$ is associated with partial tracking (partial tipping).
 
For practical reasons, it would be very useful to find warnings of tipping points, and ``early warning indicators'' have been developed in several cases (see for example \cite{Ditlevsen2010,Ritchie2016,Clements2016}). We mention in particular the work of \citet{Ritchie2016} which shows that even for R-tipping some of the most widely used early warning signals, like increase of the autocorrelation and variance, may be useful. Extending those results and applying them on partial tipping of attractors that are not simply equilibria is not straightforward. For example, although the phenomenon of partial tipping is quite clear if the attractors are  considered set-wise, from individual trajectories it is not possible to determine whether there is partial or total tipping. This is a challenging issue one has to tackle in order to develop early-warning signals for partial tipping.
 
Finally, we note that dealing with non-minimal attractors (i.e attractors that have proper sub attractors) could lead to a weak type of tracking. {\em Weak tracking} happen when the forward limit of the pullback attractor $A_{+\infty}^{[\Lambda,r,A_-]}$ included as a proper subset of the attractor of the future limit system $A_+$. There is no  possibility of weak tracking for a branch of periodic orbit attractors, simply because of minimality of the periodic orbit and Lemma \ref{lem:invariant limit}. If One consider more general branches of attractors however, this becomes a real possibility.

\begin{figure}[!]

   	\subcaptionbox{\label{fig:manifsect1}} [0.32\linewidth]
    {\includegraphics[scale = 0.27]{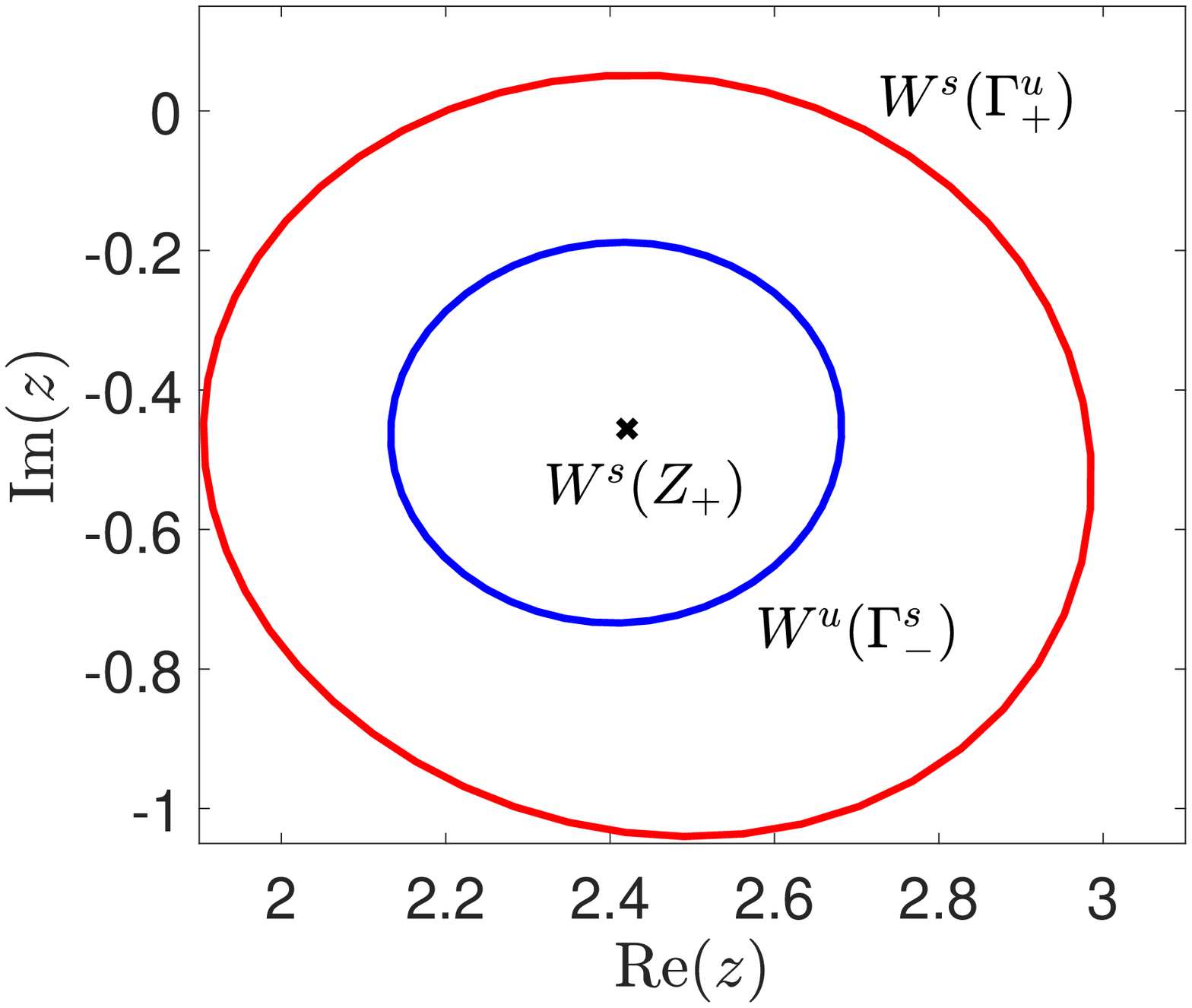}}
    	\subcaptionbox{\label{fig:manifsect2} } [0.32\linewidth]
    {\includegraphics[scale = 0.27]{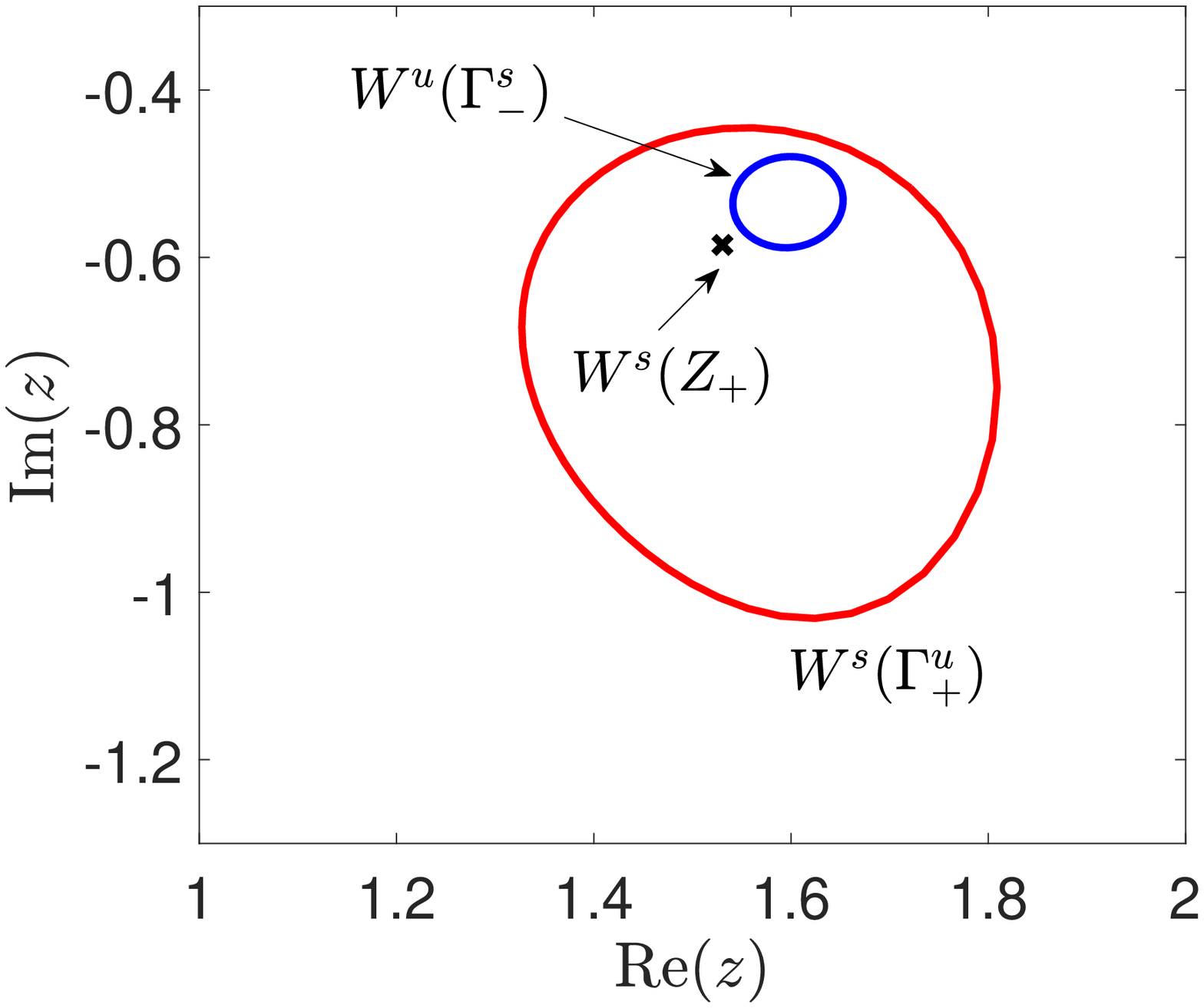}}    
    \subcaptionbox{\label{fig:manifsect3}} [0.32\linewidth]
    {\includegraphics[scale = 0.27]{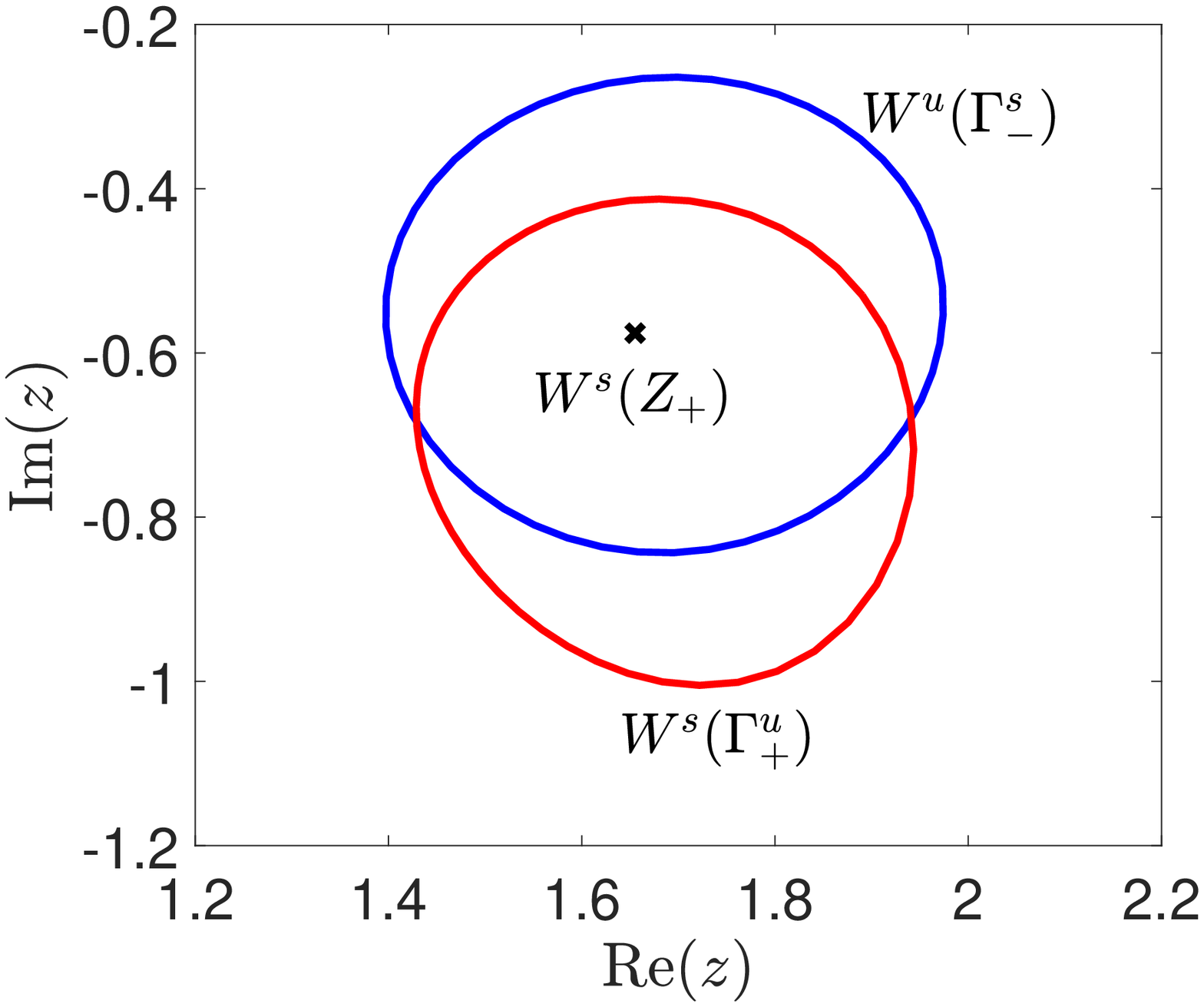}}

      \subcaptionbox{\label{fig:manifsect4}} [0.32\linewidth]
    {\includegraphics[scale = 0.27]{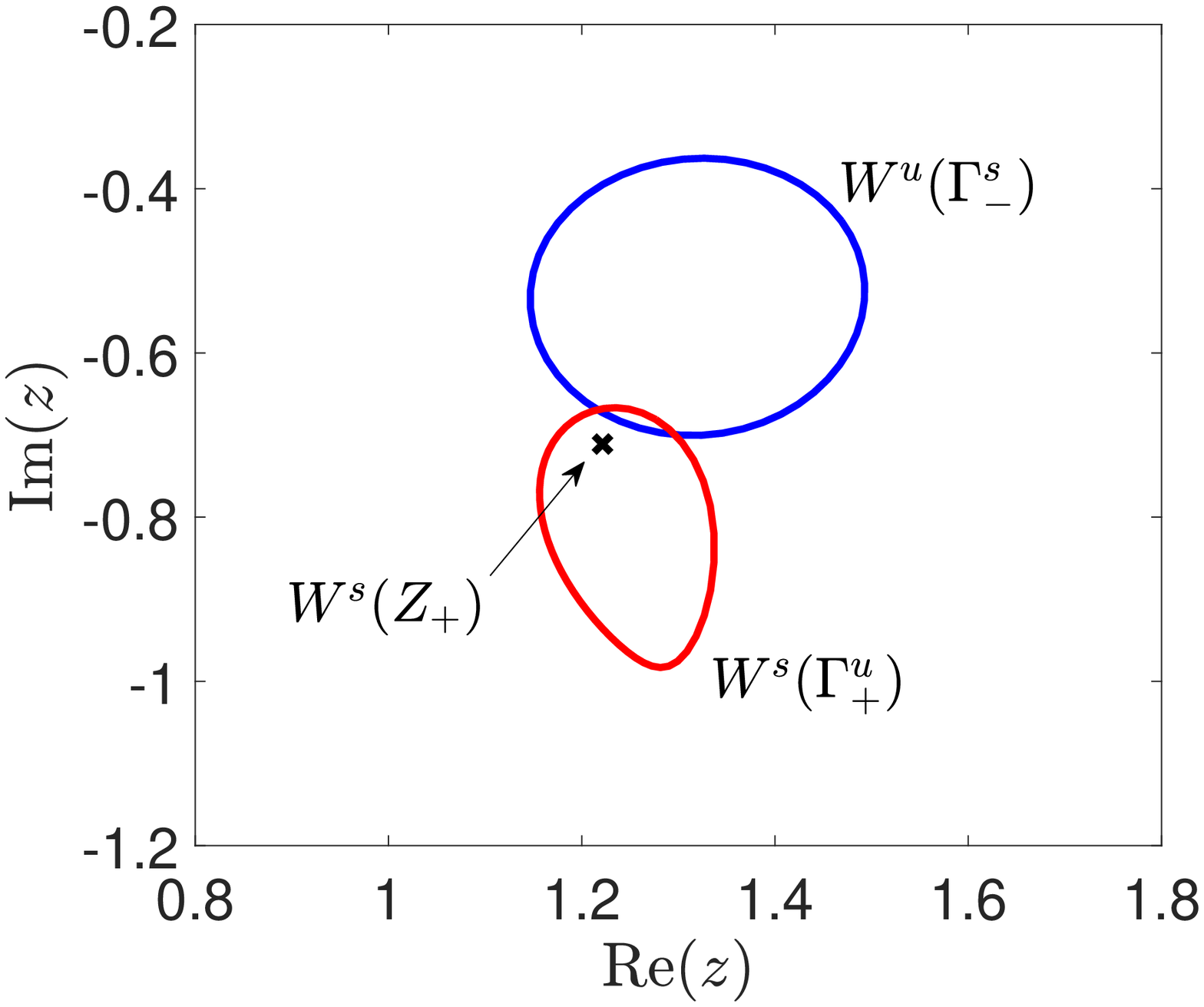}}    
    	\subcaptionbox{\label{fig:manifsect5} } [0.32\linewidth]
    {\includegraphics[scale = 0.27]{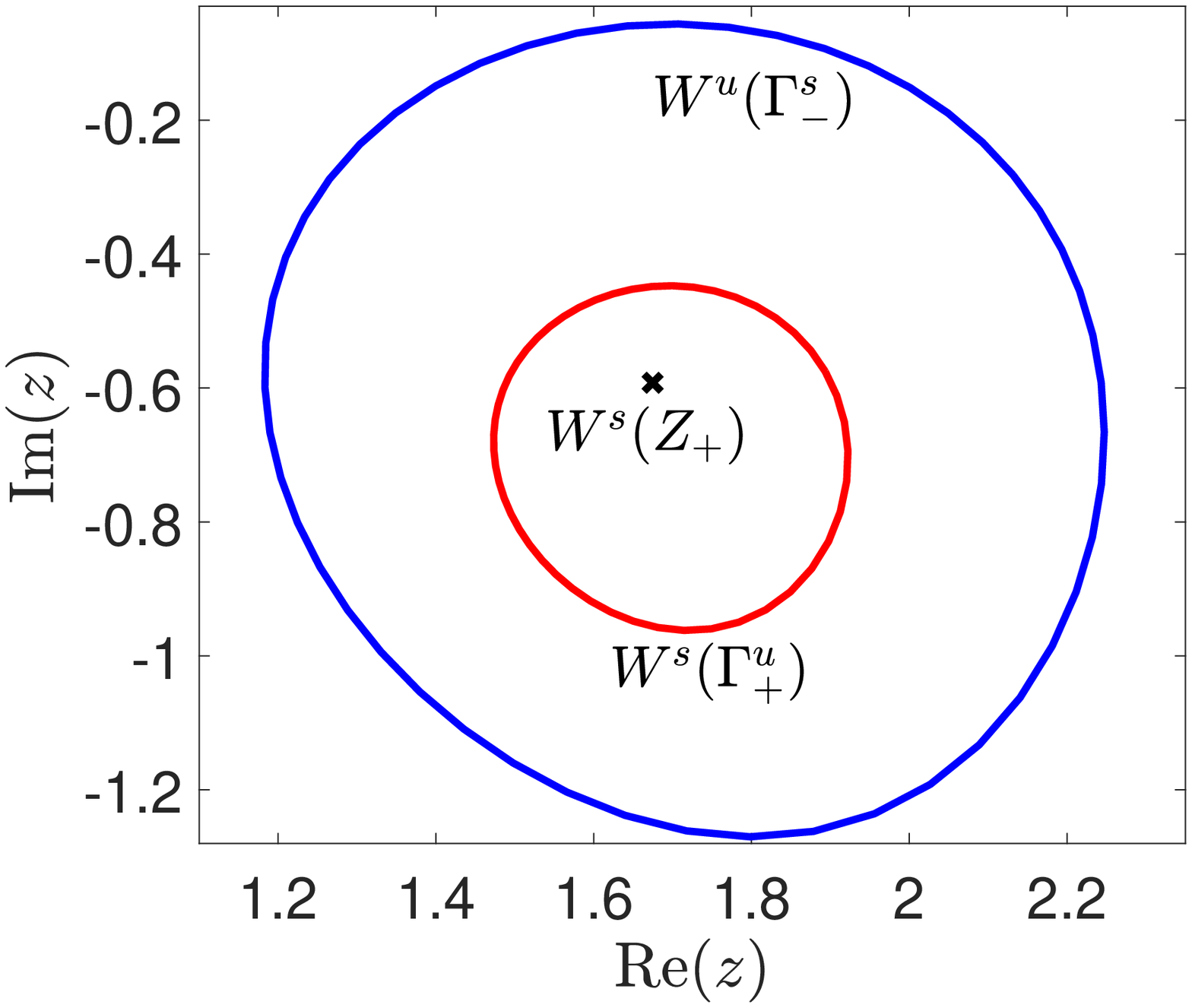}}
    \subcaptionbox{\label{fig:manifsect6}} [0.32\linewidth]
    {\includegraphics[scale = 0.27]{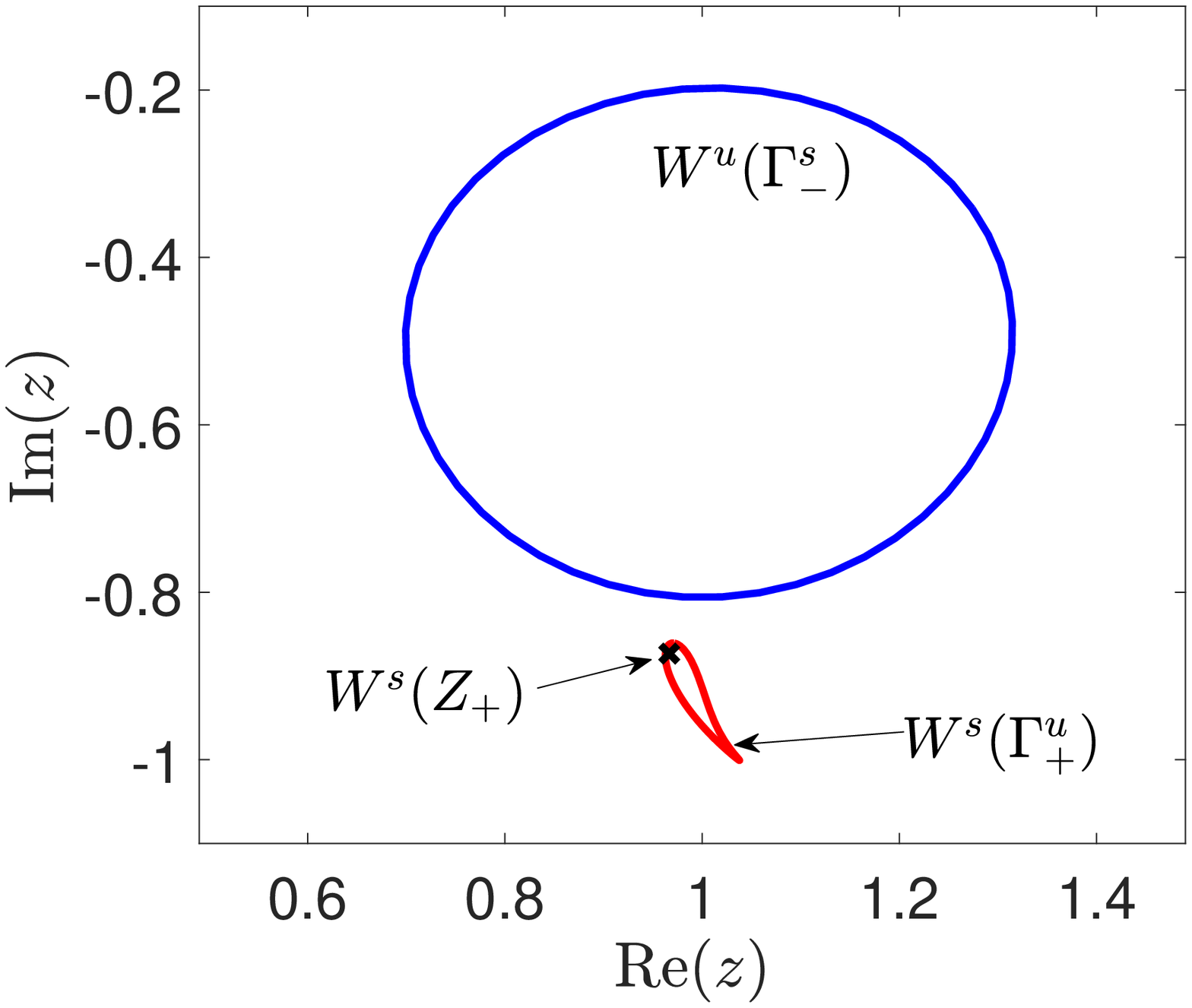}}
   \caption{A section (fixed $\Lambda\approx 1.257$) of the manifolds $W^u(\Gamma_-^s)$ (in blue). $W^s(\Gamma_+^u)$ (in red) and $W^s(Z_+)$ (in black) showing the topological behaviours of their stable intersections for regions I-VI shown in Figure \ref{fig:paraplane1}. The values of the parameters are (a) $a=0.1$, $r=0.1$ region I, (b) $a=0.005$,$r=0.157$ region II, (c) $a=0.1$, $r=0.15$ region III, (d) $a=0.04$, $r=0.18$ region IV, (e) $a=0.2$, $r=0.15$ region V and finally (f) $a=0.1$, $r=0.21$ region VI.}
    \label{fig:manifsect}
\end{figure}

\begin{acknowledgements}
HA's research is funded by the Higher Committee For Education Development in Iraq (HCED Iraq) grant agreement No D13436. PA's research is partially supported by the CRITICS Innovative Training Network, funded by the European Unions Horizon 2020 research and innovation programme under the Marie Sklodowska-Curie grant agreement No 643073. We would like to thank the following for their valuable comments on this research at various stages: Jan Sieber, Bernd Krauskopf, Ulrike Feudel, Mark Holland, Damian Smug, Courtney Quinn, Paul Ritchie, Sebastian Wieczorek and James Yorke.
\end{acknowledgements}

\appendix

\section{Proof of Theorem 2.2}
\label{AppProofTh2.2} 
\begin{proof} 
To show that $A_{-\infty}^{[\Lambda,r,A_-]}\subset A_-$, choose $\tilde{\eta}$ as in Lemma~\ref{lem:absorbing}, and pick any $\eta \in (0,\tilde{\eta}]$. By Lemma~\ref{lem:uniqueness}, the upper backward limit of $\mathcal{A}^{[\Lambda,r,A_-]}$ can be uniquely defined as: 
$$
A_{-\infty}^{[\Lambda,r,A_-]} = \bigcap_{\tau > 0} \overline{\bigcup_{t \leq -\tau} A_t^{[\Lambda,r,A_-]}}=  \bigcap_{\tau > 0} \overline{ \bigcup_{t \leq -\tau \atop s<t} \Phi\left(t,s,\mathcal{N}_\eta(A_-)\right)}.
$$
By Lemma~\ref{lem:absorbing}, for all $\delta>0$ there exists $\tau , \tilde{\tau}>0$ such that 
$$
\bigcup_{t \leq -\tau \atop s<t-\tilde{\tau}} \Phi(t,s,\mathcal{N}_\eta (A_-)) \subset \mathcal{N}_\delta(A_-)
$$ 
which gives
$$ 
A_{-\infty}^{[\Lambda,r,A_-]} = \bigcap_{\tau > 0} \overline{\bigcup_{t \leq -\tau \atop s<t} A_t^{[\Lambda,r,A_-]}} \subset \mathcal{N}_\delta(A_-)
.$$ 
Recall that holds for all $\delta > 0$, which in turn implies that
$A_{-\infty}^{[\Lambda,r,A_-]} \subset \overline{A_-} = A_-$.

To show that \eqref{eq:pbattractor} is a pullback attractor, we need to show it is compact, invariant and attracts a neighbourhood. For all $t \in \R$, ${A}_t^{[\Lambda,r,A_-]}$ is intersection of closed sets, which implies that it is closed. To show that it is compact, we just need to show it is bounded. By using Lemma~\ref{lem:absorbing} again, $\Phi(s_2,s_1,\mathcal{N}_\eta(A_-)) \subset \mathcal{N}_\eta(A_-)$ for all $s_1<s_2-\tilde{\tau}< -\tau$, by the cocycle property of $\Phi$ we get:
$$
\bigcup_{s<-\tau} \Phi(t,s,\mathcal{N}_\eta(A_-)) \subset \Phi(t,-\tau,\mathcal{N}_\eta (A_-)).
$$ 
Now since $\Phi(t,s,.)$ is a diffeomorphism for all $t,s \in \R$, $\Phi(t,-\tau,\mathcal{N}_\eta(A_-))$ is bounded, and so $\bigcup_{s<-\tau} \Phi(t,s,\mathcal{N}_\eta(A_-))$ is bounded. Hence, ${A}^{[\Lambda,r,A_-]}_t=\bigcap_{\tau>0}\overline{\bigcup_{s<-\tau} \Phi(t,s,\mathcal{N}_\eta(A_-))}$ is bounded. Therefore, ${A}^{[\Lambda,r,A_-]}_t$ is compact for all $t\in\R$. 

To prove $\mathcal{A}^{[\Lambda,r,A_-]}$ is invariant note that
\begin{eqnarray*}
\Phi(t,s,{A}^{[\Lambda,r,A_-]}_s)&=& \Phi \left(t,s,\bigcap_{\tau>0}\overline{\bigcup_{k<-\tau} \Phi(s,k,\mathcal{N}_\eta(A_-))}\right) \\
&=& \bigcap_{\tau>0} \overline{\bigcup_{k<-\tau} \Phi \left(t,s, \Phi(s,k,\mathcal{N}_\eta(A_-))\right)}  \\
&=&\bigcap_{\tau>0} \overline{\bigcup_{k<-\tau} \Phi \left(t,k,\mathcal{N}_\eta(A_-)\right)} \hspace{3.3cm} \\
&=& {A}^{[\Lambda,r,A_-]}_t
\end{eqnarray*}   
for all $t>s$ (we use the property that $\Phi(t,s,\cdot)$ is a diffeomorphism for all $t,s$).

To show that $\mathcal{A}^{[\Lambda,r,A_-]}$ attracts an open set $U$ in pullback sense, let  $U=\mathcal{N}_\eta(A_-)$, $t\in \mathbb{R}$ with $\eta$ as before, and define
$$
B_{\tau,t}:= \overline{\bigcup_{k<-\tau} \Phi (t,k,U)}.
$$
Note that $A_t^{[\Lambda,r,A_-]}=\bigcap_{\tau>0} B_{\tau,t}$ and $B_{s,t}\subset B_{\tau,t}$ for any $\tau<s$. Moreover, $d(B_{\tau,t},A_t^{[\Lambda,r,A_-]})\rightarrow 0$ as $\tau\rightarrow \infty$. Using Lemma~\ref{lem:absorbing} we have that
$$
\Phi(t,s,U)\subset B_{\tau,t}
$$
for all sufficiently negative $s$ (depending on $t$ and $\tau$). Hence for such $s$
$$
d(\Phi(t,s,U),B_{\tau,t})=0
$$
Hence
$$
\lim_{s\rightarrow -\infty} d(\Phi(t,s,U),A_t)=0
$$
and thus $\mathcal{A}^{[\Lambda,r,A_-]}$ is a pullback attractor. 
\end{proof}

\section{Approximating PtoP and PtoE connections using Lin's method}
\label{AppLinMeth}

We consider some Lin problems for our system where there are connections between the saddle objects $\Gamma_-^s$ and $\Gamma_+^u$ and $Z_+$. We are looking for connections between $\Gamma_-^s$ in the past and $\Gamma_+^u$, $Z_+$ on the future. The unstable and stable manifold, $W^u(\Gamma_-^s)$ and $W^s(\Gamma_+^u)$, $W^s(Z_+)$ are of dimensions 2, 2 and 1 respectively. Assuming there exist a connection $Q$ then for all point $q \in Q$ we have the following: 
$$
\dim\big(T_qW^u(\Gamma_-^s) \bigcap T_qW^s(\Gamma_+^u)\big)=2
$$
$$
\dim\big(T_qW^u(\Gamma_-^s) \bigcap T_qW^s(Z_+)\big)=1.
$$
We set the Lin section $\Sigma$, which is two dimensional liner space, half way between:
$$
\Sigma = \big\{w\in\R^3: \big\langle w-(0,0,\lambda_{\max}/2),(0,0,1) \big\rangle = 0 \big\}.
$$
The connection orbit $Q$ intersects $\Sigma$ transversely. i.e. $Q=Q^-\bigcup Q^+$ where: 
$$
Q^- = \{w^-(t):t\leq0 \}\subset W^u(\Gamma_-^s) \ \ \text{where} \ \  w^-(1)\in\Sigma,
$$
$$
Q^+ = \{w^+(t):t\geq0 \}\subset W^s(\Gamma_-^u) \ \ \text{where} \ \  w^+(0)\in\Sigma.
$$

Now we define the ``Lin gap" $\eta:=w^-(1)-w^+(0) \in \Sigma$. Lin's method require that $\eta$ lies in a fixed $d\leq\dim(\Sigma)-1$ dimensional liner space $L$, which satisfy the following condition,\cite{Krauskopf2008}
\begin{equation}
\label{Linspace} 
\dim(W^- \oplus W^+ \oplus L)=\dim(\Sigma)
\end{equation}
where $W^- = T_{w^-(0)}W^u(\Gamma_-^s)\bigcap T_{w^-(0)}\Sigma$ and $W^+ = T_{w^+(0)}W^s(\Gamma_+^u)\bigcap T_{w^+(0)}\Sigma$. 
The choice of $L$ could be done by considering the adjoint variational equation along the solution $Q$\cite{Zhang2012}, however the Lin space can be chosen arbitrarily as long as (\ref{Linspace}) is satisfied.  The definitions of $Q^-$ and $Q^+$ as well as condition (\ref{Linspace}) are formulated to investigate the PtoP connection between $\Gamma_-^s$ and $\Gamma^u_+$. However, it still applicable to the PtoE connection between $\Gamma_-^s$ and $Z_+$ with changing $\Gamma_+^u$ to $Z_+$ in each of them.

Note we also need approximations of the eigendirections for the periodic orbits: given a periodic solution $\Gamma=\left\{g(t): 0<t< T_\Gamma \right\}$ of the system \eqref{bvp.ode} with period $T_\Gamma$, the eigendirections $\gamma_{s,c,u}$ and Floquet multiplies $\beta_{s,c,u}$ are obtained as solutions of
\begin{eqnarray}
\begin{aligned}
\dot{\gamma}_{s,c,u} &= T_\Gamma \, G_u(g(s);\mu)\gamma_{s,c,u},  \\ 
\gamma_{s,c,u}(1)&= \beta_{s,c,u} \, \gamma_{s,c,u}(0),&
1 &= \big\langle \gamma_{s,c,u}(0), \gamma_{s,c,u}(0) \big\rangle.
\label{eq:eigdirc}
\end{aligned}
\end{eqnarray}
for $0<s<1$.

We implement this method as follows:
\begin{itemize}
\item Solving the boundary value problem \eqref{eq:eigdirc} numerically by using the {\tt bvp5c} MATLAB solver gives the eigendirections for $\Gamma_-^s$ and $\Gamma_+^u$ which can be used to formulate the projection conditions in (\ref{eq:PtoEBC}, \ref{eq:PtoPBVP1}, \ref{eq:PtoPBVP2}).
\item We formulate the solution of (\ref{eq:PtoEEQ}, \ref{eq:PtoPEQ}) as MATLAB functions that return $\xi(r,a)$, using the same boundary value solver. We use $(0,1,0)$ as a basis for the Lin space $L$.
\item We consider $\xi: \mathbb{R}^2 \rightarrow \mathbb{R}$ as smooth real valued function that by finding its zero one can find the desired connections. We did that by using Newton-Raphson iteration with tolerance $10^{-5}$ and defining the derivative of $\xi$ by finite difference with step size $10^{-4}$. 
\item Continuing the zero set of $\xi (r,a)$ in the $(a,r)$-plane by pseudo-arclength continuation gives the curves in Figures~\ref{fig:paraplane2} and \ref{fig:paraplane3}. 
\end{itemize}

\section{Finding the tracking/tipping regions by using shooting method}
\label{appShooting}

The tracking/tipping regions of \eqref{eq:zLambda} shown in Figure~\ref{fig:paraplane1} and \ref{fig:paraplane3} are found using a shooting method as follows: 
\begin{itemize}
\item We start with $M$ evenly spaced initial conditions near the periodic orbit $\Gamma_-^s$ and integrate \eqref{eq:zLambda} forward in time using the {\tt ode45} MATLAB solver. We vary $M$ depending on the value of $r$. As $r$ increases it become difficult to determine  partial tipping. Therefore, we increase $M$ gradually from $200$ when $r\approx 0.06$ to $20000$ when $r\geq0.24$ to compute the partial tipping region in Figure \ref{fig:paraplane1} effectively. 
\item Considering a large $T>0$, we require $s\leq\Lambda(t)\leq (\lambda_{\max}-s)$ for $t\in[-T,T]$ and some small real number $s$. In our computations we set $s=0.01$ which effectively determines $T$: for the parameter shift $\Lambda(\tau)= \frac{\lambda_{\max}}{2} \left(\tanh\left( \frac{\tau \lambda_{\max} }{2} \right)+1 \right)$, the integration time $T$ can be given as $T=\ln\left(\frac{\lambda_{\max}-s}{s}\right)/(r\lambda_{\max})$ (note however that this will be inadequate near the bifurcations $a=0$ and $0.25$, as noted in the text).
\item We determine which of the $M$ trajectories approach $\Gamma_+^u$ by measuring the distance between the end-point of each trajectory and the equilibrium point $Z_+$. 
\item The stable manifold of $Z_+$, $W^s(Z_+)$, can be computed as initial value problem of the time reversed system \eqref{eq:zLambda} with initial condition $(\lambda_{\max},0,\lambda_{\max}-s)$. 
\item The regions of tracking, partial tipping, and total tipping, and whether $W^s(Z_+)$ limits to $Z_-$ or $\Gamma_-^u$ in the past, are used to characterize six different regions where the behaviour of the system is qualitatively different. These regions are shown in Figure~\ref{fig:paraplane1} and the behaviour of the system at each of them is illustrated in Figure~\ref{fig:manifsect}
\end{itemize}


\bibliography{../bibliography}

\end{document}